\documentclass[a4paper,11pt,reqno]{amsart}
\usepackage{amsthm}
\usepackage{amsmath}
\usepackage{enumerate}
\usepackage{amsfonts}
\usepackage{amssymb}
\usepackage{fullpage}
\usepackage{amsmath,amscd}
\usepackage{stmaryrd}
\usepackage{graphicx}
  \usepackage{gfsartemisia}
  \usepackage{tikz-cd}
\usepackage{array}
\usepackage{amsmath,amssymb,amsfonts,dsfont}
\usepackage[utf8]{inputenc}
\usepackage[english]{babel}
\usepackage[T1]{fontenc}
\usepackage{graphicx}
\usepackage{float}
\usepackage{pdfpages}
\usepackage[a4paper, margin = 3cm, bottom = 3cm]{geometry}
\usepackage{ifpdf}
\usepackage{marginnote}
\usepackage{hyperref}

\usetikzlibrary{calc}
\usetikzlibrary{decorations.pathreplacing,decorations.markings,decorations.pathmorphing}
\usetikzlibrary{positioning,arrows,patterns}
\usetikzlibrary{cd}
\usetikzlibrary{intersections}
\usetikzlibrary{arrows}

\hypersetup{pdfborder=0 0 0,
	    colorlinks=true,
	    citecolor=black,
	    linkcolor=blue,
	    urlcolor=red,
	    pdfauthor={Guillaume Tahar}
	   }

\DeclareMathOperator{\SO}{SO}
\DeclareMathOperator{\PSL}{PSL}

\DeclareMathOperator{\flatsu}{flat}
\DeclareMathOperator{\GL}{GL}
\newtheorem{thm}{Theorem}[section]

\newtheorem{prop}[thm]{Proposition}
\newtheorem{lem}[thm]{Lemma}

\theoremstyle{definition}
\newtheorem{defn}[thm]{Definition}
\theoremstyle{remark}
\newtheorem{rem}[thm]{Remark}
\theoremstyle{definition}

\theoremstyle{definition}
\newtheorem{ex}[thm]{Example}
\theoremstyle{definition}

\numberwithin{equation}{section}
\pagestyle{plain}

\title{Dihedral monodromy of cone spherical metrics}

\author{Quentin Gendron}
\address[Quentin Gendron]{Instituto de Matem\'{a}ticas de la UNAM
Ciudad Universitaria, CDMX, 04510,
M\'{e}xico}
\email{quentin.gendron@im.unam.mx}

\author{Guillaume Tahar}
\address[Guillaume Tahar]{Beijing Institute of Mathematical Sciences and Applications, Huairou District, Beijing, China}
\email{guillaume.tahar@bimsa.cn}

\date{\today}
\keywords{Quadratic differentials, conical singularities, positive curvature, dihedral monodromy, co-axial monodromy}

\begin{document}

\begin{abstract}
Among metrics of constant positive curvature on a punctured compact Riemann surface with conical singularities at the punctures, dihedral monodromy means that the action of the monodromy group $\mathcal{M} \subset \SO(3)$ globally preserves a pair of antipodal points. Using recent results about local invariants of quadratic differentials, we give a complete characterization of the set of conical angles realized by some cone spherical metric with dihedral monodromy.
\end{abstract}
\maketitle
\setcounter{tocdepth}{1}
\tableofcontents

\section{Introduction}

For a compact Riemann surface $X$ of genus $g$, a finite set of points $A_{1},\dots,A_{n} \in X$ and an array of angles $2\pi(a_{1},\dots,a_{n})$, a natural generalization of the uniformization problem is about the existence of a metric of constant curvature in $X \setminus \lbrace A_{1},\dots,A_{n} \rbrace$ that extends to the points $A_{1},\dots,A_{n}$ as conical singularities of prescribed angles.\newline

In this problem, Gauss-Bonnet theorem states that the sum of diffuse and singular curvature is a topological invariant. The total diffuse curvature is $2\pi(
2-2g-n+\sum a_{i})$. Depending on the sign of this quantity, the metric should be hyperbolic, flat or spherical in the complement of the singularities.\newline
If the latter quantity is nonpositive, the existence of a hyperbolic or a flat metric has been obtained by several authors, see \cite{He, Mc, Tr,Tr1} for references. On the opposite, when this quantity is positive, the existence of a spherical metric is far from being granted.\newline
Some authors have speculated an equivalence between the existence of a metric of constant positive curvature and a notion of stability of bundles (in the spirit of Yau-Tian-Donaldson conjecture for Fano manifolds), see \cite{SLX}.\newline

We could gain a better understanding of cone spherical metrics by the study of metrics with constrained monodromy. In spherical metrics with \textit{co-axial monodromy}, the monodromy is contained in the group of rotations around a given axis. The realization problem has been solved by Mondello and Panov in \cite{MP} for the class of metrics with non co-axial monodromy. In the case of metrics with co-axial monodromy, it has been solved by Eremenko in \cite{Er}.\newline

A cone spherical metric has \textit{dihedral monodromy} if its monodromy group (as a subgroup of $\SO(3)$) globally preserves a pair of antipodal points (see~\cite{SCLX}). This is equivalent to the global preservation of the dual great circle. The rotations of the monodromy group are rotations around the axis and rotations of angle $\pi$ around any axis whose antipodal points belong to the preserved great circle (see Section~\ref{sec:SO3}). The metrics with co-axial monodromy form a subclass of the metrics with dihedral monodromy.\newline

In \cite{SCLX}, the authors obtain the developing map of a spherical structure with dihedral monodromy by the integration of a square root of a quadratic differential. In this way, they prove existence of cone spherical metrics with dihedral monodromy for some arrays of angles. The technical result in their paper (Theorem~1.8) amounts to a characterization of the arrays of residues at the poles a quadratic differential may have. They considered only the case of differentials with simple zeroes and double poles.\newline
Their work can be extended and completed. Indeed, in the recent papers \cite{GT,GT1} is given a complete characterization of the local invariants an Abelian or a quadratic differential can realize. Using the solution of this problem (Question 1.7 in \cite{SCLX}), we are able to give an explicit characterization of the arrays of angles that can be realized by a cone spherical metric with dihedral monodromy. The key issue, explained in Section~\ref{sec:JSdif}, is the connection between a subclass of spherical metrics (hemispherical surfaces introduced in Definition~\ref{def:hemsurf}) and a subclass of quadratic differentials (totally real Jenkins-Strebel differentials introduced in Definition~\ref{def:totrealJS}).\newline

\subsection{Main results}

In a cone spherical metric realizing some array of angles, we distinguish between:
\begin{itemize}
\item \textit{even} conical singularities for which the angle is in $2\pi\mathbb{Z}$;
\item \textit{odd} conical singularities for which the angle is in $\pi(2\mathbb{Z}+1)$;
\item \textit{non-integer} conical singularities for which the angle is not in $\pi\mathbb{Z}$.
\end{itemize}

For a metric with $n$ conical singularities, we have $n=n_{E}+n_{O}+n_{N}$. These three terms are respectively the numbers of even, odd and non-integer conical singularities.

\begin{defn}\label{def:41}
We consider arrays of angles $2\pi(a_{1},\dots,a_{n_{E}},b_{1},\dots,b_{n_{O}},c_{1},\dots,c_{n_{N}})$ where the three subfamilies are respectively even ($a_{i}\in\mathbb{Z}$), odd ($b_{i}\in\mathbb{Z}+\frac{1}{2}$) and non-integer angles ($c_{i}\notin\frac{1}{2}\mathbb{Z}$).\newline
The \textit{total sum} is defined by $\sigma=\sum a_{i} + \sum b_{j} + \sum c_{k}$.\newline
The \textit{maximal integral sum} is defined by:
\begin{itemize}
    \item $T=\sum a_{i} + \sum\limits_{j=2}^{n_{O}} b_{j}$ if $n_{O}$ is odd (assuming  $b_{1}\leq b_{2}\leq \dots\leq b_{n_{O}}$);
    \item $T=\sum a_{i} + \sum b_{j}$ if $n_{O}$ is even.
\end{itemize}
In particular, $T \in \mathbb{Z}$ and $T \leq \sigma$.
\end{defn}

Just like Gauss-Bonnet inequality $\sigma > 2g+n-2$ is a necessary condition for the existence of a cone spherical metric with prescribed angles at the conical singularities, the main necessary condition for an array of angles to be realized by a spherical metric with dihedral monodromy is the \textit{strengthened Gauss-Bonnet inequality}.

\begin{thm}\label{thm:GBplus}
Let $2\pi(a_{1},\dots,a_{n_{E}},b_{1},\dots,b_{n_{O}},c_{1},\dots,c_{n_{N}})$ be an array of angles. If this array of angles is realized by a cone spherical metric with dihedral monodromy on a surface of genus~$g$, then it should satisfy the \textit{strengthened Gauss-Bonnet inequalities}:
\begin{itemize}
\item $T \geq 2g+n-1$ if $n_{O}$ is even and $n_{N}=0$;
\item $T \geq 2g+n-2$ otherwise.
\end{itemize}
\end{thm}

We briefly explain why this condition appears. A conical singularity of the spherical metric either corresponds to a double pole of the quadratic differential whose residue is determined by the conical angle or a point of order $k \geq -1$ of the differential. The strengthened Gauss-Bonnet inequality then follows from the fact that the sum of the orders of the singularities of a quadratic differential on a genus $g$ Riemann surface is $4g-4$.\newline

In the general case, the strenghtened Gauss-Bonnet inequality is also a sufficient condition for the existence of cone spherical metrics with dihedral monodromy. However, there are  exceptional families of arrays of angles that cannot be realized by a spherical metric with dihedral monodromy. They come from general arithmetic obstructions to the existence of quadratic differentials with integer residues obtained in \cite{GT,GT1}. The list of obstructions and the complete characterization of arrays of angles that can be realized are contained in six theorems.\newline
For spherical metrics in genus zero, the classification is given in Theorems~\ref{thm:63},~\ref{thm:65} and~\ref{thm:610} for the strict dihedral case. We remind of \cite[Theorem 1]{Er} for the co-axial case in Theorem~\ref{thm:61}.\newline
For spherical metrics in higher genus, the classification is given in Theorem~\ref{thm:52} for the strict dihedral case and Theorem~\ref{thm:53} for the co-axial case.\newline

The organization of the paper is the following:
\begin{itemize}
\item In Section~\ref{sec:dihedral}, we introduce the co-axial and dihedral monodromy classes. We draw the connection between quadratic differentials and cone spherical metrics with dihedral monodromy.
\item In Section~\ref{sec:difprescrites}, we present the results on quadratic and Abelian differentials with prescribed orders of zeroes and poles.
\item In Section~\ref{sec:GBplus}, we state the strenghtened Gauss-Bonnet inequality which is the main necessary condition for realization of an array of angles by a cone spherical metric with dihedral monodromy.
\item In Section~\ref{sec:highergenus}, we give a characterization of arrays of angles that can be realized in a spherical surface of genus $g \geq 1$ with co-axial or dihedral monodromy (Theorems~\ref{thm:52} and~\ref{thm:53}), comparing the two classes.
\item In Section~\ref{sec:genuszero}, we state the theorem of Eremenko which characterizes the arrays of angles for cone spherical metrics with co-axial monodromy in genus zero. Comparatively, we state and prove the analogous result for metrics with dihedral monodromy (Theorems~\ref{thm:63},~\ref{thm:65} and~\ref{thm:610}).
\end{itemize}

\section{From differentials to spherical metrics with dihedral monodromy}
\label{sec:dihedral}

In this section, we recall the basic statements about a special class of spherical surfaces. Then we show how they are related to quadratic differentials and give some background on them.

\subsection{Spherical structures}
\label{sec:SO3} On a compact surface $S$ with a finite set $\left\{A_{1},\dots,A_{n}\right\}$  of singular points, a \textit{spherical structure} is an atlas of charts on $S \setminus \left\{A_{1},\dots,A_{n}\right\}$ with values in the standard sphere $\mathbb{S}^{2}$ whose transition maps belong to $\SO(3)$.\newline

A singular point of a spherical surface is a \textit{conical singularity} of angle $\theta$ if it is locally isometric to the singularity of a hemispherical sector of angle $\theta$ (see Definition~\ref{defn:HSector}).

\begin{defn}\label{defn:HSector}
A \textit{hemispherical sector} of angle $\alpha \in ]0;2\pi]$ is a singular surface with boundary, obtained by considering the sector angle $\alpha$ between two meridians in the standard half-sphere and identifying these two sides. It has a conical singularity of angle $\alpha$ and a geodesic boundary of length $\alpha$.
\end{defn}

The definition of hemispherical sectors extends by ramified cover to any angle $\alpha \in \mathbb{R}_{+}^{\ast}$. They provide a geometric model for every conical singularity.\newline

Given a spherical structure on a surface $S$, there is a representation of the fundamental group of $S \setminus \left\{A_{1},\dots,A_{n}\right\}$ into the group of linear-fractional transformations. Its image is the {\em monodromy} of the spherical structure. Note that it is a subgroup of $\SO(3)$. A first natural subclass of spherical metrics is given by
metrics with \textit{reducible} or \textit{co-axial} monodromy.\newline

\begin{defn}
A cone spherical metric has \textit{co-axial monodromy} if its monodromy group fixes each element of an antipodal pair of points (an axis).
\end{defn}

In this paper, we focus on a class of metrics with slightly more general monodromy, introduced in~\cite{SCLX}.

\begin{defn}
A cone spherical metric has \textit{dihedral monodromy} if its monodromy group preserves a unordered pair of antipodal points.
\end{defn}

Among cone spherical metrics with dihedral monodromy, we distinguish co-axial monodromy (preserving pointwise the two antipodal points of the axis) and strict dihedral monodromy (dihedral but not co-axial).\newline

\subsection{Latitude foliation}
\label{sec:latitude}

The dihedral monodromy preserves a great circle in the sphere. We will refer to it as the \textit{equator}. We will also refer to the \textit{equatorial net} as the locus in the spherical surface that is mapped to the equator in every chart. Since monodromy acts by isometries, it also preserves the {\em latitude foliation} that decomposes the sphere into circles of constant latitude. Besides, the \textit{absolute latitude}, which  is the absolute value of the angular distance of a point of the sphere to the equator, is also well-defined.\newline

We rephrase the previous paragraph. Given a surface $S$ with a cone spherical metric with dihedral monodromy, the \textit{absolute latitude} is the map $\phi\colon S \rightarrow [0,\frac{\pi}{2}]$ which associates to a point the norm of its latitude. The preimage of $0$ by $\phi$ is the \textit{equatorial net} while the preimage of $\frac{\pi}{2}$ is the \textit{polar locus}. As we will see, if the monodromy is in fact co-axial, then the sign of the latitude is globally defined.\newline

\begin{rem}
The only circle of latitude which is a geodesic is the equator. The others are just loxodromic paths.
\end{rem}

The local geometry of conical singularities induces constraints on their positions in the latitude foliation. A conical singularity $A$ of angle $\alpha$ should satisfy the following conditions:
\begin{itemize}
\item if $\alpha \notin \pi\mathbb{Z}$, then $\phi(A)=\frac{\pi}{2}$;
\item if $\alpha \notin 2\pi\mathbb{Z}$, then $\phi(A)\in \lbrace 0;\frac{\pi}{2} \rbrace$;
\end{itemize}

In other words, a point of the polar locus can support a conical singularity of any angle. Away from the polar locus, the angle of the singularity should be an integer multiple of $\pi$ corresponding to the number of distinct branches of the foliation that approach the singularity. If this number is odd, then the singularity automatically belongs to the equatorial net. Indeed, the latitude circle the singularity belongs to should be preserved by the nontrivial monodromy of a simple loop around this singularity.\newline

\begin{rem}
For the metrics with co-axial monodromy, the \textit{latitude} (not just the absolute latitude) defined in each chart is preserved by the monodromy and it thus globally defined. This implies in particular that every singularity whose angle does not belong to $2\pi\mathbb{Z}$ should belong to the polar locus of the latitude foliation.
\end{rem}

\subsection{Hemispherical surfaces}
\label{sec:hemispherical}

\begin{defn}\label{def:hemsurf}
A \textit{hemispherical surface} is a closed surface with a cone spherical metric obtained by gluing finitely many hemispherical sectors along their geodesic boundary. The gluing can identify several boundary points and create conical singularities whose angles are integer multiples of~$\pi$.
\end{defn}

An example of hemispherical surface obtained by gluing three hemispherical sectors is shown in the left of Figure~\ref{fig:corrsphflat}. In that figure, the letters indicate that the boundary segments are glued together by a rotation. This surface has four singularities, the one lying in the equator being of angle $4\pi$.

This class of surfaces is an example of spherical surface with dihedral monodromy.

\begin{prop}
A hemispherical surface has dihedral monodromy.
\end{prop}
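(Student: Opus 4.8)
The plan is to analyze how the charts of adjacent sectors are glued and to show that every transition map in $\SO(3)$ stabilizes a fixed great circle, so that the monodromy group globally preserves its pair of poles.

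First I would fix a great circle $E \subset \mathbb{S}^{2}$ — the equator — together with its pair of poles $\{N,S\}$, the two antipodal points lying on the axis orthogonal to the plane of $E$. I would then observe that each hemispherical sector $H_i$ carries a developing chart $d_i \colon H_i \to \mathbb{S}^{2}$ whose image is a closed hemisphere. Since the boundary of a hemispherical sector is geodesic and the spherical structure is locally isometric to $\mathbb{S}^{2}$, the developing map sends geodesics to great circles; in particular the geodesic boundary of $H_i$ develops into the great circle bounding the hemisphere $d_i(H_i)$.

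The heart of the argument is to upgrade this local statement into a global one: all sectors develop into hemispheres bounded by the \emph{same} great circle $E$. I would normalize the chart on a base sector so that its geodesic boundary lies in $E$, and then propagate along the gluing pattern. When two sectors $H_i$ and $H_j$ are glued along a common boundary geodesic, their charts differ on the shared edge by a transition map $g_{ij} \in \SO(3)$; the image of the shared edge is a nondegenerate arc of $E$, and since a great circle is determined by any nondegenerate subarc, any element of $\SO(3)$ carrying such an arc into a great circle must carry $E$ onto itself. Hence $g_{ij}(E) = E$, and the full boundary of $H_j$ also develops into $E$, forcing $d_j(H_j)$ to be a hemisphere bounded by $E$. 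By connectedness of the closed surface $S$, every sector develops into one of the two closed hemispheres bounded by $E$.

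It then follows that every transition map preserves $E$, hence preserves the axis orthogonal to its plane, i.e. the antipodal pair $\{N,S\}$ — fixing both poles when it is a rotation about the polar axis, and swapping them when it is a half-turn about an equatorial axis. The stabilizer of $E$ in $\SO(3)$ is precisely a subgroup conjugate to $\mathbb{Z}/2\mathbb{Z} \rtimes \SO(2)$. Since the monodromy of any loop is obtained by composing the transition maps encountered along the sectors it crosses, the whole monodromy group globally preserves $\{N,S\}$, which is exactly the definition of dihedral monodromy. The main obstacle I anticipate is making the propagation step fully rigorous: one must check that the geodesic boundary develops into a great circle even after the identifications of boundary points that create the conical singularities, and one must treat the ramified-cover sectors of angle $\alpha > 2\pi$, where the boundary wraps several times around $E$. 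Both reduce to the local fact that geodesics map to great circles, so I expect the difficulty to be one of careful bookkeeping rather than a genuine conceptual gap.
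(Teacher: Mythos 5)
Your proof is correct and takes essentially the same route as the paper's: the paper's brief argument also rests on the fact that the boundaries of all sectors lie in a single great circle preserved by the monodromy (phrased there via the latitude foliation extending globally through the gluings), whose stabilizer in $\SO(3)$ is exactly the dihedral group fixing the antipodal pair. Your write-up simply makes explicit the developing-map propagation that the paper leaves implicit.
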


\begin{proof}
For a spherical surface obtained by the gluing of hemispherical sectors, a latitude foliation is defined in each sector and extends globally through the boundary. The boundary of each sector belongs to the equatorial net so the absolute latitude function is well defined on the whole surface. Consequently, the spherical structure has dihedral monodromy.
\end{proof}

A specific property of hemispherical surfaces is that the latitude of every singularity is either $0$ or $\frac{\pi}{2}$.\newline

Hemispherical surfaces are really simple to describe. They are completely characterized by the lengths of the boundary segments and the combinatorics of the gluing pattern. The equatorial net of a hemispherical surface is the union of the boundaries of every cylinder. We show in Proposition~\ref{prop:isomonodromy} that any spherical surface with dihedral monodromy can be deformed to a hemispherical surface.

\begin{ex}
The most basic hemispherical surface is obtained from a hemispherical sector of angle $\alpha$. We divide the boundary into two geodesic segments of length $\frac{\alpha}{2}$ and glue them on each other. We obtain a spherical surface with two conical singularities of angle $\pi$ on the equator and one singularity of angle $\alpha$ at the pole.
\end{ex}

\subsection{Quadratic differentials and half-translation surfaces}
\label{sec:quaddifhalf}

On a Riemann surface $X$, a quadratic differential $q$ is a meromorphic section of $K^{\otimes 2}_{X}$. Outside of the zeroes and poles of odd order, the quadratic differential $q$ is locally the square of a meromorphic $1$-form $\pm \sqrt{q}$.\newline
The antiderivatives of $\pm \sqrt{q}$ are the developing maps of a \textit{half-translation structure}. This structure is formed by an atlas in the complement of the singularities of $q$. This atlas is made of charts to $\mathbb{C}$ whose transition maps are of the form $z \mapsto \pm z +c$.\newline
For a quadratic differential that is locally given by $f(z)dz^{2}$, the associated flat metric is $|\sqrt{f}|.|dz|$.\newline

A singularity of order $a \geq -1$ corresponds to a conical singularity of angle $(2+a)\pi$ in the associated flat metric. A double pole is a point at infinity in an infinite cylinder. Besides, at a double pole, if $q=(\frac{r_{-2}}{z^{2}}+\frac{r_{-1}}{z}+r_{0}+r_{1}z+\dots)dz^{2}$, then the quadratic residue at the double pole is $r_{-2}$. The residue $r_{-2}$ should always be nonzero. In the cylinder neighboring the double pole of quadratic residue $r$, the flat cylinder is obtained by identifying the two sides of an half-infinite band by a translation of $\pm 2\pi\sqrt{r}$, see \cite{St} for details.\newline

We define a special class of quadratic differentials which is illustrated on the right of Figure~\ref{fig:corrsphflat}.

\begin{defn}\label{def:totrealJS}
A meromorphic quadratic differential $q$ on a Riemann surface $X$ is a {\em totally real Jenkins-Strebel differential} if its associated half-translation surface is formed by gluing the boundary of finitely many semi-infinite cylinders along horizontal segments.
\end{defn} 

Recall that a {\em period} of $q$ is the integral of $\pm\sqrt{q}$ along a path between two singularities of order $\geq-1$. A period is {\em absolute} if both starting and ending points coincide. The set of absolute periods forms a subgroup of $\mathbb{C}$. Note that the periods of a totally real Jenkins-Strebel differential are real. It has exactly one double pole for each cylinder and the other singularities are conical singularities of angle in $\pi\mathbb{Z}$. They are either simple poles or zeroes of the differential.\newline

\subsection{Relation between hemispherical surfaces and totally real Jenkins-Strebel differentials}
\label{sec:JSdif}

There is a construction (introduced by Eremenko in \cite{Er}) which associates to any hemispherical surface $S$ a flat surface $S_{\flatsu}$. It replaces every hemispherical sector of angle $2a\pi$ by a semi-infinite cylinder of circumference~$2a\pi$, see Figure~\ref{fig:corrsphflat}. The lengths of the segments in the boundary are preserved.\newline

Conversely, given a half-translation surface defined by a totally real Jenkins-Strebel differential, we replace each semi-infinite cylinder by a hemispherical sector. This operation is the inverse of the previous one.\newline

\begin{figure}[htb]
\begin{tikzpicture}[scale=1.1]

\begin{scope}[xshift=-6cm,yshift=.75cm]
 \coordinate (a) at (0,0);
\coordinate (b) at (1.25,0);
\coordinate (c) at (2,0);
\coordinate (d) at (.625,1);
\coordinate (e) at (1.625,1);
\coordinate (f) at (1,-1);
  \filldraw[fill=black!10] (a)  .. controls ++(90:.5) and ++(-150:.5) .. (d) coordinate[pos=.5](g)  .. controls ++(-30:.5) and ++(90:.5) .. (b) coordinate[pos=.5](h) .. controls ++(90:.5) and ++(-130:.5) .. (e) coordinate[pos=.5](i)  .. controls ++(-50:.5) and ++(90:.5) ..  (c) coordinate[pos=.5](j) .. controls ++(-90:.5) and ++(10:.5) .. (f) coordinate[pos=.5](k) .. controls ++(170:.5) and ++(-90:.5) .. (a) coordinate[pos=.5](l);

\draw (a) -- (b)coordinate[pos=.5](m) -- (c)coordinate[pos=.5](n);
\node at (m) {$1$};
\node at (n) {$2$};
\node at (g) {$\alpha$};\node at (h) {$\alpha$};
\node at (i) {$\beta$};\node at (j) {$\beta$};
\node at (k) {$\gamma$};\node at (l) {$\gamma$};

\draw (a)--  (b) coordinate[pos=.5](gm) -- (c) coordinate[pos=.5](n);
    \fill (a)  circle (1pt);
\fill[] (b) circle (1pt);
    \fill (c)  circle (1pt);
    \filldraw[fill=white] (d) circle (1pt);
    \filldraw[fill=blue] (e) circle (1pt);
        \filldraw[fill=red] (f) circle (1pt);
\end{scope}

\begin{scope}[xshift=-.65cm]
\coordinate (a) at (-1,1);
\coordinate (b) at (.25,1);

    \fill[fill=black!10] (a)  -- (b)coordinate[pos=.5](f) -- ++(0,1.2) --++(-1.25,0) -- cycle;
    \fill (a)  circle (1pt);
\fill[] (b) circle (1pt);
 \draw  (a) -- (b);
 \draw (a) -- node{$a$}++(0,1.1) coordinate (d)coordinate[pos=.5](h);
 \draw (b) --node{$a$} ++(0,1.1) coordinate (e)coordinate[pos=.5](i);
 \draw[dotted] (d) -- ++(0,.2);
 \draw[dotted] (e) -- ++(0,.2);
\node at (f) {$1$};
\end{scope}
\begin{scope}[xshift=.65cm]
\coordinate (a) at (-.75,1);
\coordinate (b) at (0,1);

    \fill[fill=black!10] (a)  -- (b)coordinate[pos=.5](f) -- ++(0,1.2) --++(-.75,0) -- cycle;
    \fill (a)  circle (1pt);
\fill[] (b) circle (1pt);
 \draw  (a) -- (b);
 \draw (a) --node{$b$} ++(0,1.1) coordinate (d)coordinate[pos=.5](h);
 \draw (b) --node{$b$} ++(0,1.1) coordinate (e)coordinate[pos=.5](i);
 \draw[dotted] (d) -- ++(0,.2);
 \draw[dotted] (e) -- ++(0,.2);
\node at (f) {$2$};
\end{scope}

\begin{scope}[xshift=-.5cm,yshift=-.5cm]
\coordinate (a) at (-1,1);
\coordinate (b) at (1,1);
\coordinate (c) at (.25,1);

    \fill[fill=black!10] (a)  -- (c)coordinate[pos=.5](f)-- (b)coordinate[pos=.5](g) -- ++(0,-1.2) --++(-2,0) -- cycle;
    \fill (a)  circle (1pt);
\fill[] (b) circle (1pt);
    \fill (c)  circle (1pt);
 \draw  (a) -- (b);
 \draw (a) --node{$c$} ++(0,-1.1) coordinate (d)coordinate[pos=.5](h);
 \draw (b) --node{$c$} ++(0,-1.1) coordinate (e)coordinate[pos=.5](i);
 \draw[dotted] (d) -- ++(0,-.2);
 \draw[dotted] (e) -- ++(0,-.2);
\node at (f) {$1$};
\node at (g) {$2$};
\end{scope}

\end{tikzpicture}
\caption{The correspondence between a hemispherical surface and a flat surface associated to a totally real Jenkins-Strebel differential.} \label{fig:corrsphflat}
\end{figure}
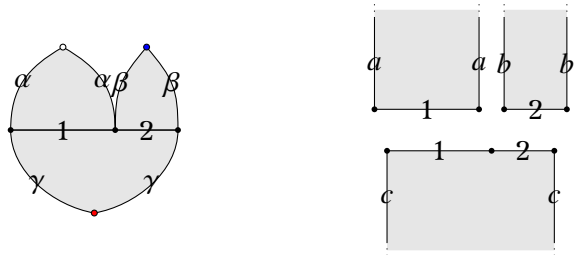

The group of absolute periods of the quadratic differential is the image of the monodromy group of the hemispherical surface into the rotation group along the preserved axis (up to a factor $2\pi$).\newline

A totally real Jenkins-Strebel differential $q$ is the global square of $1$-form if and only if every cylinder of the flat surface can be given a sign such that every boundary segment bounds a positive and a negative cylinder.\newline
The latter condition implies in particular that singularities should be of even order (with an angle in $2\pi\mathbb{Z}$).\newline
Translated in the language of spherical geometry, a hemispherical surface has globally defined latitude if and only if its totally real Jenkins-Strebel differential is the global square of a $1$-form. This property is equivalent to co-axial monodromy. The surfaces pictured in Figure~\ref{fig:corrsphflat} are examples of such behaviour.\newline

\subsection{Some background on complex projective structures and quadratic differentials}
\label{sec:backgroundquad}

In this section, we recall some facts about complex projective structures and their relation with quadratic differentials. This gives interesting background on this work but do not play a major role in the remaining sections.\newline

A {\em complex projective structure} is defined by charts with values in $\mathbb{CP}^{1}$ and transition maps in $\PSL(2,\mathbb{C})$. In particular, a spherical structure is a complex projective structure whose monodromy is conjugated to a subgroup of $\SO(3)$.\newline

A complex projective structure has {\em dihedral monodromy} if it globally preserves a pair of points of $\mathbb{CP}^{1}$, see Section~6 in \cite{FG}. Up to conjugation, we can assume these preserved points are $\lbrace{ 0 , \infty \rbrace}$. Then the monodromy will act by functions of the form $z \mapsto az^{\pm 1}$ with $a \in \mathbb{C}^{\ast}$.\newline
This directly implies existence of a quadratic differential $q$ such that any developing map of the structure is of the form $e^{\int^{z} \sqrt{-q}}$. Besides, in order to avoid irregular singularities of the developing map, it is reasonable to restrict to quadratic differentials with at worst double poles (logarithmic singularities).\newline

In this class of complex projective structures, the monodromy preserves a metric if and only if all multiplicative factors appearing in the transition functions have modulus $1$. In other words, periods of the quadratic differential should be real (see Section~1.1 of \cite{SCLX} for details). In particular, the quadratic differential is Jenkins-Strebel (see \cite{St}).\newline

Then, the totally real Jenkins-Strebel quadratic differential introduced in Definition~\ref{def:totrealJS} are those for which the zeroes and poles either belong to the equator or the poles of the latitude foliation defined by the corresponding core spherical metric. The whole  correspondence is summarised in Table~\ref{table:cor}.\newline

\begin{table}[h]
\begin{tabular}{|c|c|l|r|} 
\hline  
Complex-analytic object & Geometric structure\\  
\hline \hline
Quadratic differential & Complex projective structure\\
& with dihedral monodromy \\  
\hline 
Jenkins-Strebel quadratic differential & Cone spherical metric \\
& with dihedral monodromy\\ 
\hline 
Totally real Jenkins-Strebel differential & Hemispherical surface \\
\hline
\end{tabular}
\caption{Correspondence between analytic and geometric objects.}
\label{table:cor}
\end{table}

\section{Differentials with prescribed singularities}
\label{sec:difprescrites}

In this section, we recall some results of \cite{GT,GT1} about existence of differentials with prescribed local behaviour. Moreover, we recall an important operation on differentials called the {\em contraction flow}.

\subsection{Quadratic differentials}
\label{sec:quaddif}

By abuse of notation, we refer to a pair $(X,q)$ where $X$ is a Riemann surface of genus $g$ and $q$ a quadratic differential on $X$. The moduli spaces of these objects are stratified according to the orders of the singularities of~$q$. In this paper, quadratic differentials have at worst double poles. For $a_{1},\dots,a_{n} \in 2\mathbb{N}^{\ast}$ and $b_{1},\dots,b_{m} \in 2\mathbb{N}-1$,  the {\em stratum of quadratic differentials} with zeroes and simple poles of multiplicities $a_{1},\dots,a_{n},b_{1},\dots,b_{m}$ and $p$ double poles is denoted by $\mathcal{Q}(a_{1},\dots,a_{n},b_{1},\dots,b_{m},-2^{p})$. See \cite{La} for details. \newline

We also require that quadratic differentials are not global squares of $1$-forms. Such differentials are called \textit{primitive}. Riemann-Roch theorem (or Gauss-Bonnet theorem) implies that $\sum a_{i}+\sum b_{j}-2p=4g-4$.\newline

For such quadratic differentials, classical complex analysis shows that at a point of order $m \geq -2$, the differential is locally biholomorphic to
\begin{itemize}
    \item $z^{m}dz^{2}$ if $m \geq -1$;
    \item $\left(\frac{rdz}{z}\right)^{2}$ if $m=-2$.
 \end{itemize}   
In the latter case, the {\em quadratic residue} $r^{2}$ is a local invariant (see \cite{St} for details).\newline

Theorems~1.1,~1.2,~1.3 and~1.9 of~\cite{GT1} provide a complete characterization of the arrays of quadratic residues that can be realized in any given stratum of quadratic differentials. We extract from these results the following two theorems.

\begin{thm}[\cite{GT1}]\label{thm:31}
Let $\mathcal{Q}$ be a stratum $\mathcal{Q}(a_{1},\dots,a_{n},b_{1},\dots,b_{m},-2^{p})$ of primitive quadratic differentials on a surface of genus $g \geq 1$ such that $p \geq 1$.\newline
Every configuration of quadratic residues $(r_{1},\dots,r_{p})\in(\mathbb{R}_{+}^{\ast})^{p}$ is realized by a quadratic differential of $\mathcal{Q}$ with the exception of configurations $(r,\dots,r)$ for the strata $\mathcal{Q}(4s,-2^{2s})$ and $\mathcal{Q}(2s+1,2s-1,-2^{2s})$ in genus one.\newline
\end{thm}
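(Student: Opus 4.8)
The plan is to derive the statement from the complete characterization of realizable residue configurations established in \cite{GT1} (Theorems~1.1,~1.2,~1.3 and~1.9), specializing to the totally real positive setting $(r_1,\dots,r_p)\in(\mathbb{R}_{+}^{\ast})^{p}$ and tracking which obstructions survive that specialization. First I would record the only numerical constraint, namely the degree identity $\sum a_i+\sum b_j-2p=4g-4$, which for the stated strata is already built into the definition of $\mathcal{Q}$ and imposes nothing further on the $r_i$. In particular there is no analogue of the residue theorem constraining the quadratic residues globally, so realizability is a purely geometric question once the stratum is fixed.

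The realization (sufficiency) direction is flat-geometric. Given a target configuration, I would build a half-translation surface as a gluing of $p$ semi-infinite cylinders whose circumferences are the prescribed $\sqrt{r_i}$, glued along horizontal boundary segments exactly as in the correspondence of Subsection~\ref{sec:JSdif}; the combinatorics of the gluing graph is then tuned to produce the prescribed orders $a_i$, $b_j$ of the zeros and simple poles and the correct genus. The role of the hypothesis $g\geq 1$ is that at least one handle provides enough freedom in the boundary-segment lengths to split and merge zeros and to absorb arbitrary (possibly distinct) circumferences; this is where the constructive surgeries of \cite{GT1} apply.

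The substantive content is therefore the sharp list of exceptions, and I expect this to be the main obstacle. To locate it I would pass to the orientation double cover $\hat{X}\to X$, on which $\sqrt{q}$ becomes a genuine Abelian differential $\omega$ with $\sigma^{\ast}\omega=-\omega$ for the covering involution $\sigma$; each double pole of quadratic residue $r_i$ lifts to a pair of simple poles of $\omega$ with opposite residues $\pm\sqrt{r_i}$. The whole problem is thereby transported to prescribing residues of an anti-invariant Abelian differential, governed by the genus-one theory of \cite{GT}. For the two minimal strata in genus one the period lattice of $\hat{X}$ is so rigid that the totally symmetric configuration $(r,\dots,r)$, whose lift carries residues $\pm\sqrt{r}$ each with multiplicity $2s$, cannot be realized primitively: for $\mathcal{Q}(4s,-2^{2s})$ the flat surfaces with all circumferences equal are forced to be squares of translation surfaces, hence non-primitive and excluded, whereas for $\mathcal{Q}(2s+1,2s-1,-2^{2s})$, where primitivity is automatic because of the odd-order zeros, the same symmetry instead triggers the arithmetic lattice obstruction of \cite{GT1}.

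The delicate point, and the heart of \cite{GT1}, is sharpness: one must show these are the \emph{only} obstructed configurations, that is, that in these two strata every \emph{non-constant} configuration with some $r_i\neq r_j$ is realizable, and that in all other genus-one strata (and in every higher genus) even the constant configuration is realizable. Breaking the symmetry by making two circumferences unequal destroys both the global-square structure and the lattice coincidence, which is exactly what lets the explicit surgeries close up in genus one; making this dichotomy rigorous, rather than merely exhibiting realizations, is where the real work lies.
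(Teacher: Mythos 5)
The first thing to say is that the paper contains no proof of this statement: it is imported wholesale from Theorems~1.1, 1.2, 1.3 and~1.9 of \cite{GT1} (``we extract from these results the following two theorems''), so there is no internal argument to compare yours against. Judged as a reconstruction of the proof in \cite{GT1}, your proposal correctly sets up the framework --- no residue-theorem-type constraint on quadratic residues, flat-geometric realization by gluing semi-infinite cylinders of circumferences $\sqrt{r_i}$, and the passage to the canonical double cover where each double pole lifts to a pair of simple poles with residues $\pm\sqrt{r_i}$ --- but it does not close either half of the theorem.

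The serious gaps are in your account of the exceptional configurations. For $\mathcal{Q}(4s,-2^{2s})$, your stated mechanism --- that equal circumferences force the surface to be the square of a translation surface --- is not an argument but a restatement of the claim: being a global square is precisely the negation of primitivity, so what you offer as the reason is exactly the assertion to be proven, and no argument is given for why equal residues force the square structure. For $\mathcal{Q}(2s+1,2s-1,-2^{2s})$, where you correctly note that primitivity is automatic, you invoke an ``arithmetic lattice obstruction of \cite{GT1}''; but the arithmetic obstructions in \cite{GT,GT1} occur only in genus zero (compare Theorem~\ref{thm:33} with Theorem~\ref{thm:34}, and the last two exceptions in Theorem~\ref{thm:32}), and the exception here is of a different nature: the configuration $(r,\dots,r)$ is excluded for \emph{every} real $r>0$, with no commensurability hypothesis, so no integrality-type obstruction can be the mechanism. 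Moreover, your double-cover reduction replaces the original problem by an equivariant one --- realizing residues by a differential that is anti-invariant under the covering involution --- which the residue-prescription results you lean on (Theorem~\ref{thm:33}) do not address: they produce \emph{some} differential with the given residues, with no control on the existence of the involution, so nothing can be concluded in either direction from them. Finally, you explicitly defer the sharpness direction (realizability of all non-constant configurations in these two strata and of all configurations elsewhere). What remains is a plausible outline of the strategy of \cite{GT1}, not a proof.
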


In genus zero, it should be noted that a quadratic differential is primitive if and only if at least one of its singularities is of odd order. 

\begin{thm}[\cite{GT1}]\label{thm:32}
Let $\mathcal{Q}$ be a stratum $\mathcal{Q}(a_{1},\dots,a_{n},b_{1},\dots,b_{m},-2^{p})$ of primitive quadratic differentials on the Riemann sphere with $m,p \geq 1$.\newline
Every configuration of quadratic residues $(r_{1},\dots,r_{p})\in(\mathbb{R}_{+}^{\ast})^{p}$ is realized by a quadratic differential of $\mathcal{Q}$ with the following exceptions:
\begin{itemize}
\item $\mathcal{Q}(p-2,p-2,-2^{p})$ with $p$ odd and configurations of the form $(A^{2},B^{2},C^{2},\dots,C^{2})$ with $C=A+B$ or $B=A+C$ and $A,B,C>0$;
\item $\mathcal{Q}(p-1,p-3,-2^{p})$ with $p$ even and configurations of the form $(A^{2},A^{2},B^{2},\dots,B^{2})$ with $A,B>0$;
\item $\mathcal{Q}(a_{1},\dots,a_{n},b_{1},b_{2},-2^{p})$ and configurations of the form $(L \cdot f_{1}^{2},\dots,L \cdot f_{p}^{2})$ with $L>0$, $f_{1},\dots,f_{p} \in \mathbb{N}^{\ast}$, $\sum f_{j}$ is even and $\sum f_{j} \leq b_{1}+b_{2}+2$;
\item $\mathcal{Q}(a_{1},\dots,a_{n},b_{1},b_{2},-2^{p})$ and configurations of the form $(L \cdot f_{1}^{2},\dots,L \cdot f_{p}^{2})$ with $L>0$, $f_{1},\dots,f_{p} \in \mathbb{N}^{\ast}$, $\sum f_{j}$ is odd and $\sum f_{j} \leq \max(b_{1},b_{2})$.\newline
\end{itemize}
\end{thm}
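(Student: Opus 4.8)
The plan is to deduce Theorem~\ref{thm:32} from the complete classification of realizable quadratic-residue configurations established in \cite{GT1} (Theorems~1.1,~1.2,~1.3 and~1.9), of which the present statement is the specialization to positive real residues in genus zero. Throughout, for a configuration $(r_{1},\dots,r_{p}) \in (\mathbb{R}_{+}^{\ast})^{p}$ of quadratic residues it is convenient to set $r_{j}=c_{j}^{2}$ with $c_{j}>0$; the $c_{j}$ are, up to sign, the periods of the locally defined $1$-form $\sqrt{q}$ around the double poles, so that the obstructions of \cite{GT1}, stated in terms of additive and multiplicative relations among residues, become relations among the $c_{j}$.

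First I would isolate, among the exceptional configurations of \cite{GT1}, those that survive the two constraints imposed here: genus zero, and $m,p \geq 1$. In genus zero a quadratic differential is primitive precisely when some singularity has odd order, so the hypothesis $m \geq 1$ is exactly the primitivity assumption; this rules out the genus-one exceptions of Theorem~\ref{thm:31} and selects the genus-zero branch of the classification. I would then intersect each remaining exceptional family of \cite{GT1} with the positive real locus $(\mathbb{R}_{+}^{\ast})^{p}$, rewriting every condition through $r_{j}=c_{j}^{2}$.

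The four families then emerge as follows. The strata $\mathcal{Q}(p-2,p-2,-2^{p})$ (with $p$ odd) and $\mathcal{Q}(p-1,p-3,-2^{p})$ (with $p$ even) are the two extremal genus-zero strata with no even zeroes and exactly two odd-order singularities; the parity of $p$ is forced by the requirement that the two orders be odd, and the degree identity $\sum a_{i}+\sum b_{j}-2p=-4$ is immediate. For these the obstruction of \cite{GT1} specializes, under $c_{j}=\sqrt{r_{j}}$, to the additive degeneration $C=A+B$ (or $B=A+C$) in the first case and to the coincidence pattern $(A^{2},A^{2},B^{2},\dots,B^{2})$ in the second. The last two families concern all strata with exactly two odd singularities $b_{1},b_{2}$: here the governing obstruction is commensurability of the residues, $r_{j}=L\cdot f_{j}^{2}$ with $f_{j} \in \mathbb{N}^{\ast}$, the two subfamilies being separated by the parity of $\sum f_{j}$ together with the thresholds $\sum f_{j}<2p$ in the even case and $\sum f_{j}\leq \max(b_{1},b_{2})$ in the odd case; I would check that these thresholds are exactly the positive-real shadow of the corresponding inequalities in \cite{GT1}.

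The main obstacle is the completeness of the translation rather than any single computation: one must verify both that every exceptional configuration of \cite{GT1} compatible with genus zero and $m,p \geq 1$ falls, after the substitution $r_{j}=c_{j}^{2}$, into one of the four listed families, and that none of these families is spurious. The delicate points are the parity bookkeeping, namely matching the even/odd dichotomy in $\sum f_{j}$ to the corresponding alternative in \cite{GT1}, and confirming that the thresholds $\sum f_{j}<2p$ and $\sum f_{j}\leq\max(b_{1},b_{2})$ are the exact boundaries of realizability, where an off-by-one or a sign slip would spuriously include or exclude a configuration.
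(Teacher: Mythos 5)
Your proposal follows exactly the route the paper itself takes: Theorem~\ref{thm:32} is given no independent proof there, being explicitly ``extracted'' from Theorems~1.1, 1.2, 1.3 and~1.9 of \cite{GT1}, i.e., obtained as the specialization of that classification to positive real quadratic residues on the Riemann sphere, with $m \geq 1$ serving (as you correctly note) as the primitivity hypothesis in genus zero. Your translation of the exceptional families via $r_{j}=c_{j}^{2}$, together with the parity and threshold bookkeeping, is consistent with the statement, so the two approaches coincide.
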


\subsection{Abelian differentials}
\label{sec:difabel}

A connected component of a stratum of quadratic differentials is either entirely made of squares of Abelian differentials or entirely made of non-squares. In this Section, we describe obstructions to the realization of configurations of residues by quadratic differentials that are global squares of $1$-forms.\newline

We consider {\em stratum} $\mathcal{H}(a_{1},\dots,a_{n},-1^{p})$ of pairs $(X,\omega)$ where $X$ is a compact Riemann of genus $g$ and $\omega$ is a meromorphic $1$-form with zeroes of orders $a_{1},\dots,a_{n}$ and $p$ simple poles. Gauss-Bonnet theorem implies that $\sum a_{i}-p=2g-2$.\newline
We consider configurations of real numbers $(\lambda_{1},\dots,\lambda_{x},-\mu_{1},\dots,-\mu_{y})$ with $x+y=p$, whose sum is zero (to satisfy residue theorem) and such that $\lambda_{1},\dots,\lambda_{x},\mu_{1},\dots,\mu_{y}>0$.\newline

In \cite{GT}, we show in Theorem~1.1 that in genus at least one, the only obstruction is the Residue theorem.

\begin{thm}[\cite{GT}]\label{thm:33}
In any stratum $\mathcal{H}(a_{1},\dots,a_{n},-1^{p})$ of meromorphic $1$-forms on a surface of genus $g \geq 1$ with $p \geq 1$, every configuration of residues $(\lambda_{1},\dots,\lambda_{x},-\mu_{1},\dots,-\mu_{y})$ with $\sum \lambda_{i} = \sum \mu_{j}$ is realized by a differential in the stratum.
\end{thm}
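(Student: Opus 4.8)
The plan is to realize $\omega$ flat-geometrically, as a translation surface assembled from half-infinite horizontal cylinders glued to a compact flat core, and then to correct the genus and the zero orders by local surgeries that leave all residues fixed. Since the prescribed residues are real, each simple pole is modeled by a \emph{horizontal} half-infinite cylinder whose circumference equals the absolute value of its residue and whose orientation (opening upward or downward) records the sign: the poles of residue $\lambda_i>0$ give $x$ upward cylinders of circumferences $\lambda_1,\dots,\lambda_x$, and those of residue $-\mu_j$ give $y$ downward cylinders of circumferences $\mu_1,\dots,\mu_y$.

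First I would build a base surface carrying exactly the required residues. The hypothesis $\sum_i\lambda_i=\sum_j\mu_j=:\ell$ means that the finite boundaries of the upward cylinders and those of the downward cylinders can be laid along two horizontal segments of the same length $\ell$; gluing these two segments (possibly across a short flat core) produces a closed translation surface whose poles are precisely the $p$ cylinder ends with the prescribed residues, and whose remaining singularities are zeros located at the junction points where consecutive cylinder boundaries meet. A direct Gauss--Bonnet count shows that the total order of these zeros is $2g-2+p$, matching $\sum_i a_i$, so the construction is consistent; at this stage the zero orders and the genus are whatever the naive gluing pattern dictates.

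It then remains to move this base example to the prescribed stratum $\mathcal{H}(a_1,\dots,a_n,-1^p)$ without disturbing the residues. Two standard surgeries, each supported in a disc or along a pair of parallel horizontal slits, suffice: (i) \emph{adding a handle}, performed by cutting two parallel horizontal slits of equal length and cross-gluing, raises the genus by one while leaving every residue unchanged, so iterating reaches any target genus $g\geq 1$; and (ii) \emph{splitting and merging zeros}, performed by opening or closing a small horizontal slit at a cone point, replaces a zero of order $a$ by two of orders $a',a''$ with $a'+a''=a$, and conversely. Both operations fix every residue, so by composing them I can reach any partition of $2g-2+p$ into the $n$ positive parts $a_1,\dots,a_n$ in the desired genus.

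The main obstacle is \emph{feasibility and genericity of the surgeries}: I must guarantee that at each step the relevant horizontal segment is long enough to host the slit, that the slit avoids the existing cone points, and that no surgery forces a cylinder to degenerate (zero circumference) or accidentally identifies residues that should stay distinct. This is exactly where the hypothesis $g\geq 1$ enters — the presence of at least one handle supplies a free absolute period that absorbs the construction and provides the needed room — and it explains why, unlike in genus zero, the residue theorem $\sum\lambda_i=\sum\mu_j$ is the \emph{only} obstruction and no exceptional configurations survive.
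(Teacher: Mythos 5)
A preliminary remark: the paper does not prove Theorem~\ref{thm:33} at all; it imports it from \cite{GT} (Theorem~1.1 there), so there is no internal proof to compare yours against, and your proposal must be judged on its own terms. Judged so, it has a genuine gap, and the gap sits exactly where the theorem has content. Your toolkit is a base surface glued from half-infinite horizontal cylinders plus residue-preserving local surgeries: adding handles, and ``splitting and merging'' zeros. Splitting a zero is indeed a legitimate local surgery, but merging is not: the inverse of splitting is defined only when the two zeros are already joined by a short saddle connection in the special local configuration that a splitting produces, and deforming an arbitrary surface, with residues held fixed, into such a configuration is a global problem essentially equivalent to the realizability statement you are trying to prove. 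This matters because none of your other moves ever decreases the number of zeros: a handle glued along two slits at regular points creates new zeros of orders $(1,1)$ (or $(2)$); a handle ``bubbled'' at a zero raises its order by $2$; a handle whose two slits emanate from two distinct zeros of orders $a_1,a_2$ yields zeros of orders $(a_1+a_2+1,1)$; splitting only increases the count. Meanwhile the base surface cannot always be built with one large zero: for the residues $(1,1,1,-1,-1,-1)$, the arithmetic obstruction of Theorem~\ref{thm:34} forces every genus-zero realization to have all zero orders at most $2$, hence at least two zeros. Starting from such a base, no sequence of your surgeries ever reaches a single-zero stratum; in particular your scheme cannot produce a differential in $\mathcal{H}(6,-1^{6})$ (genus one, one zero of order six) with residues $(1,1,1,-1,-1,-1)$, which Theorem~\ref{thm:33} asserts exists. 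Even two-zero targets such as $\mathcal{H}(3,3,-1^{6})$ with these residues are unreachable by your moves.

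A related misdiagnosis is the role of the hypothesis $g\geq 1$. You attribute it to ``room'' for the slits, but half-infinite cylinders already provide unbounded room in genus zero, and yet genus zero carries the obstructions of Theorem~\ref{thm:34}; so room cannot be the point. What a handle really contributes is a pair of absolute periods that are free parameters independent of the residues: in genus zero every absolute period is an integer combination of the residues, which is the source of the arithmetic obstruction, and only extra handle periods can break this integrality. For that, the handle must be built into the initial gluing pattern itself; the positive-genus surfaces --- in particular those in the strata $\mathcal{H}(2g-2+p,-1^{p})$ with a single zero --- have to be constructed directly, handle included, as in \cite{GT}, rather than obtained from genus-zero surfaces by residue-preserving local surgeries. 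As written, your argument establishes the theorem only for target strata whose partition of $2g-2+p$ refines one attainable from a genus-zero base, and the excluded cases are precisely the hard ones.
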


In genus zero, there is an additional arithmetic obstruction, described in Theorem~1.2 of~\cite{GT} and Theorem~2 of~\cite{Er}.

\begin{thm}\cite{GT}\label{thm:34}
In any stratum $\mathcal{H}(a_{1},\dots,a_{n},-1^{p})$ of meromorphic $1$-forms on a surface of genus zero with $p \geq 1$, every configuration of residues $(\lambda_{1},\dots,\lambda_{x},-\mu_{1},\dots,-\mu_{y})$ with $\sum \lambda_{i} = \sum \mu_{j}$ is realized by a differential in the stratum with the following exception.\newline
If the configuration of residues is of the form $L( f_{1},\dots,f_{x},-g_{1},\dots,-g_{y})$ with $L>0$ and $f_{1},\dots,f_{x},g_{1},\dots,g_{y}$ are integers without nontrivial common factor, then it can be realized in the stratum if and only if $\sum f_{i}= \sum g_{j} > \max(a_{1},\dots,a_{n})$.\newline
\end{thm}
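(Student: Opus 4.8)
The plan is to translate the realization problem into flat geometry and, in the arithmetic case, into the existence of an auxiliary rational map. Since all prescribed residues are real, the holonomy of $\int\omega$ around each simple pole is purely imaginary, so any $1$-form realizing the configuration carries a flat structure that cuts the sphere into $p$ semi-infinite cylinders, one for each pole, of circumferences $2\pi\lambda_{i}$ and $2\pi\mu_{j}$. The critical graph $\Gamma$ is then a ribbon graph on $S^{2}$ whose vertices are the zeros, the vertex produced by a zero of order $a_{k}$ having even degree $2(a_{k}+1)$, and whose faces are the cylinders; these faces are canonically $2$-coloured by the sign of the residue, so that every edge of $\Gamma$ separates a positive from a negative cylinder. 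Realizing the configuration is equivalent to producing such a planar $2$-coloured ribbon graph together with positive edge lengths making each positive face have boundary length $2\pi\lambda_{i}$ and each negative face length $2\pi\mu_{j}$, the only global constraint being $\sum\lambda_{i}=\sum\mu_{j}$, i.e. the Residue theorem.

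First I would establish that the arithmetic condition is necessary. Suppose the residues are $(Lf_{1},\dots,Lf_{x},-Lg_{1},\dots,-Lg_{y})$ with $\gcd(f_{i},g_{j})=1$ and that $\omega$ realizes them. Because $H_{1}$ of the punctured sphere is generated by loops around the poles and each residue lies in $L\mathbb{Z}$, the periods of $\tfrac{1}{L}\int\omega$ all lie in $2\pi i\mathbb{Z}$, so $h:=\exp\!\big(\tfrac{1}{L}\int\omega\big)$ descends to a single-valued rational map $h\colon\mathbb{CP}^{1}\to\mathbb{CP}^{1}$ with $\omega=L\,dh/h$. A local computation shows that $h$ has a zero of order $f_{i}$ at each positive pole and a pole of order $g_{j}$ at each negative pole, so $\deg h=\sum f_{i}=\sum g_{j}=:N$; moreover at a zero of $\omega$ of order $a_{k}$ the map $h$ takes a finite nonzero value with differential vanishing to order $a_{k}$, hence has local degree $a_{k}+1$ there. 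Since a degree-$N$ rational map has local degree at most $N$ everywhere, $a_{k}+1\le N$ for all $k$, that is $N>\max_{k}a_{k}$.

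For sufficiency I would reverse this correspondence. In the commensurable case with $N>\max_{k}a_{k}$ it is enough to construct a rational map $h$ of degree $N$ with ramification profile $(f_{1},\dots,f_{x})$ over $0$, profile $(g_{1},\dots,g_{y})$ over $\infty$, and over $n$ further distinct values a single critical point of profile $(a_{k}+1,1,\dots,1)$; setting $\omega:=L\,dh/h$ then gives a form in $\mathcal{H}(a_{1},\dots,a_{n},-1^{p})$ with exactly the prescribed residues, since a Riemann--Hurwitz count (using $\sum a_{k}=p-2$) shows these are all the critical points of $h$. The inequality $N>\max_{k}a_{k}$ is precisely what makes each profile a valid partition of $N$. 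In the remaining, non-commensurable case no such rigidity is present, and I would instead build the flat surface by hand: a planar $2$-coloured ribbon graph with the required vertex degrees and face count always exists, and the linear system prescribing the face lengths is then solvable with strictly positive edge lengths, the incommensurability of the residues guaranteeing that no edge is forced to degenerate.

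The main obstacle is the sufficiency in the commensurable case: one must show that the branch data above are genuinely realized by some rational map, a Hurwitz existence problem for which Riemann--Hurwitz is only a necessary condition. I expect to resolve it by an explicit inductive construction --- assembling the associated $2$-coloured ribbon graph cylinder by cylinder, or building $h$ by successively splitting the top zero and inserting the remaining ramification at simple-type branch points --- where the hypothesis $N>\max_{k}a_{k}$ guarantees at each stage that the largest local degree stays at most $N$, the boundary case $N=\max_{k}a_{k}$ corresponding exactly to an edge length degenerating to $0$.
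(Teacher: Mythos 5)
Your necessity argument is correct and complete, and it is the right mechanism: since all residues lie in $L\mathbb{Z}$, every period of $\tfrac{1}{L}\omega$ lies in $2\pi i\mathbb{Z}$, so $h=\exp\bigl(\tfrac{1}{L}\int\omega\bigr)$ descends to a rational map with $\omega=L\,dh/h$, zeros of orders $f_{i}$, poles of orders $g_{j}$, and local degree $a_{k}+1$ at each zero of $\omega$; comparing local degree with the global degree $N=\sum f_{i}$ gives $N>\max_{k}a_{k}$. Note that the present paper does not prove Theorem~\ref{thm:34} at all: it imports it from Theorem~1.2 of \cite{GT} and Theorem~2 of \cite{Er}, so your proposal has to stand on its own, and the ``only if'' half does. (One small false claim in your setup: a $1$-form realizing real residues does \emph{not} in general decompose the sphere into $p$ semi-infinite cylinders --- generically the vertical foliation is not closed on the compact core --- one must first apply the contraction flow of Section~\ref{sec:flotcontr} to replace it by a totally real Jenkins--Strebel form. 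This does not affect your necessity argument, which never uses the decomposition.)

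The genuine gap is that the existence half, which is the actual content of the theorem, is nowhere proved. In the commensurable case with $N>\max_{k}a_{k}$ you reduce to the Hurwitz existence problem for the branch data $(f_{1},\dots,f_{x})$ over $0$, $(g_{1},\dots,g_{y})$ over $\infty$, and $(a_{k}+1,1,\dots,1)$ over $n$ further values, and then state that you ``expect to resolve it by an explicit inductive construction.'' The genus-zero Hurwitz problem is notorious for admitting data that satisfy Riemann--Hurwitz yet are not realizable, so this reduction proves nothing by itself; the realizability of exactly this data under exactly the hypothesis $N>\max_{k}a_{k}$ \emph{is} Theorem~2 of \cite{Er}, i.e.\ the theorem you are trying to prove, and your proposal leaves it open. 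The non-commensurable case is in worse shape: you assert that a planar $2$-coloured ribbon graph with the required vertex degrees and face count always exists and that the face-length equations are then solvable with strictly positive edge lengths ``by incommensurability.'' For a \emph{fixed} graph this is false: the attainable face-length vectors form a proper polyhedral subcone cut out by linear inequalities. For instance in $\mathcal{H}(2,-1^{4})$ with two positive and two negative residues, every admissible graph (a bouquet of three loops, suitably nested) imposes an inequality of the form $\mu_{j}>\lambda_{i}$, and the configuration $(1,1,-1,-1)$ lies on all of these walls --- which is exactly the excluded case. So the graph must be chosen as a function of the residues, and the real work is to show that the union of these subcones, over the finitely many admissible graphs, covers every configuration outside the arithmetic exception. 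That combinatorial analysis --- precisely where the exceptional set has to re-emerge --- is missing from your proposal.
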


\subsection{Contraction flow}
\label{sec:flotcontr}

In Sections~\ref{sec:quaddif} and~\ref{sec:difabel}, we considered quadratic differentials defining flat surfaces of infinite area (because cylinders around double poles are semi-infinite). There is an action of $\GL^{+}(2,\mathbb{R})$ on strata of quadratic differentials. This group acts by postcomposition in the charts and acts naturally on the periods, see \cite{Zo} for details.\newline

The \textit{contraction flow} is a one-parameter flow in $\GL^{+}(2,\mathbb{R})$ that preserves a direction and contracts exponentially another. For a quadratic differential $q$ defining a flat surface of infinite area, we consider the contraction flow contracting a generic direction (a direction where there is no saddle connection). The trajectory of $q$ along the flow has a limit inside the same stratum. For the limit differential, every (absolute and relative) period belongs to the preserved direction, see Section~5.4 of \cite{Ta}.\newline
Infinite area hypothesis is necessary because otherwise the area of the flat surface would shrink along the flow.\newline

If we apply the contraction flow to any differential that realizes a real configurations of residues (like in Sections~\ref{sec:quaddif} and~\ref{sec:difabel}), a generic contraction flow preserving the horizontal direction will converge to a \textit{totally real Jenkins-Strebel differential}.\newline
According to Section~\ref{sec:backgroundquad}, quadratic differentials with real periods correspond to cone spherical metrics with dihedral monodromy. We use the contraction flow on quadratic differentials to construct an isomonodromic deformation of any such cone spherical metric to hemispherical surface (see Section~\ref{sec:hemispherical}).

\begin{prop}\label{prop:isomonodromy}
Any array of angles realized by a cone spherical metric with dihedral monodromy is realized by a hemispherical surface with the same monodromy.
\end{prop}

\begin{proof}
If an array of angle is realized by a cone spherical metric with dihedral monodromy, then the metric is given by a quadratic differential $q_{0}$ with real periods (see Section~\ref{sec:backgroundquad}). In the half-translation structure defined by $q_{0}$, we consider a non-horizontal direction $\alpha$ that is not the direction of a saddle connection. The action of the contraction flow preserving the horizontal direction and contracting a generic direction defines a path $q_{t}$ inside the stratum containing $q_{0}$. The orders of zeroes and poles of $q_{t}$ remain identical. Besides, since the real direction (equivalently the horizontal direction) is preserved, real residues at the double poles also remain the same.\newline
In Section~5.4 of \cite{Ta} it has been proved that such a contraction flow contracting a generic direction has a limit $q_{\infty}$ inside the ambient stratum. Besides, the direction of every geodesic segment joining the conical singularities of $q_{\infty}$ is horizontal. In other words, $q_{\infty}$ is a totally real Jenkins-Strebel differential. Following Section~\ref{sec:JSdif}, the differential $q_{\infty}$ defines a hemispherical surface.\newline
Along the path $q_{t}$, the periods of the differential along any closed loop have remained identical. In other words, the deformation of the corresponding cone spherical metrics is isomonodromic.
\end{proof}

\section{Strengthened Gauss-Bonnet inequality}
\label{sec:GBplus}

For an array of angles and a given genus $g$, we would like to know if the array of angles is realized by a cone spherical metric with dihedral monodromy on a surface of genus $g$. Conical singularities with integer or half-integer angles play a special role, see Definition~\ref{def:41}.\newline

Just like a Gauss-Bonnet inequality is necessary for realization of an array of angles by a cone spherical metric, a necessary condition for realization by a spherical metric with a dihedral monodromy is that a strengthened Gauss-Bonnet inequality should be fulfilled by the conical singularities with integer and half-integer angles alone.

\begin{proof}[Proof of Theorem~\ref{thm:GBplus}]
Following Section~\ref{sec:hemispherical}, if such an array of angles is realized by a cone spherical metric with dihedral monodromy, then it can be realized by a hemispherical surface $S$. In this hemispherical surface, the singularities with non-integer angles are automatically at the poles of the latitude foliation.\newline
We consider the totally real Jenkins-Strebel differential $q$ corresponding to hemispherical surface $S$. The quadratic differential $q$ belongs to a stratum $\mathcal{Q}(d_{1},\dots,d_{s},-2^{t})$. Here, $d_{1},\dots,d_{s} \geq -1$ are the orders of the zeroes and simple poles of $q$ while $t$ is the number of double poles. Since the total order of the zeroes and poles of a meromorphic differential on a compact Riemann surface of genus $g$ is $4g-4$, we have $d_{1}+\dots+d_{s}=4g-4+2t$. In particular, $d_{1}+\dots+d_{s}$ is an even number\newline
We consider the conical singularities corresponding to the points of orders $d_{1},\dots,d_{s}$ of $q$. Their conical angles belong in $\pi\mathbb{Z}$ and their total angle is $\pi\sum (d_{i}+2)$. The latter sum belongs to $2\pi \mathbb{Z}$. We have thus $2T \geq \sum (d_{i}+2)$ (see Definition~\ref{def:41}). It follows that $2T \geq 4g-4+2t+2s$ and $T \geq 2g+n-2$.\newline
If $n_{O}$ is even, $n_{N}=0$ and $T=2g+n-2$, then $\sigma=T$ and the classical Gauss-Bonnet inequality requires $\sigma>2g+n-2$.
\end{proof}

\section{Spherical structures in higher genus}
\label{sec:highergenus}

We apply the results about quadratic and Abelian differentials with prescribed residues in order to construct hemispherical surfaces realizing the adequate array of angles.\newline

Using equivalence discussed in Section~\ref{sec:JSdif}, we construct hemispherical surfaces corresponding to totally real Jenkins-Strebel differentials. It should be noted that in genus at least one, in strata of quadratic and Abelian differentials, there are several degrees of freedom in addition to the residues. In other words, if there is a differential that realize some configuration of real residues, then there is another differential that realizes it and such that the group of periods of the differential is dense in $\mathbb{R}$. For such a totally real Jenkins-Strebel differential, the monodromy of the corresponding hemispherical surface is dense in the subgroup of $\SO(3)$ preserving a pair of antipodal points (pointwise in the co-axial case and globally in the dihedral case).\newline

\subsection{Strict dihedral case}
\label{sec:stricdihedral}

Specific obstruction in genus one for quadratic differentials with prescribed residues (Theorem~\ref{thm:31}) lead to four exceptional families of arrays of angles that cannot appear in spherical surfaces with strict dihedral monodromy.

\begin{prop}\label{prop:51}
Four exceptional families of arrays of angles cannot be realized by a cone spherical metric with strict dihedral monodromy on a torus:
\begin{itemize}
\item $(4k+2)\pi,c,\dots,c$ with $2k$ non-integer angles $c \notin \pi\mathbb{Z}$;
\item $(2k+3)\pi,(2k+1)\pi,c,\dots,c$ with $2k$ non-integer angles $c \notin \pi\mathbb{Z}$;
\item $(4k+2)\pi$ for any integer $k$;
\item $(2k+3)\pi,(2k+1)\pi$ for any integer $k$.
\end{itemize}
\end{prop}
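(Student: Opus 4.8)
The plan is to translate a strict dihedral realisation into a \emph{primitive} totally real Jenkins–Strebel differential on the torus, and then to recognise that in each family the forced configuration of quadratic residues is exactly one of the configurations forbidden by Theorem~\ref{thm:31}. By Proposition~\ref{prop:isodef} and the correspondence of Section~\ref{sec:JSdif}, any cone spherical metric with dihedral monodromy on a torus yields a totally real Jenkins–Strebel differential $q$, and strict dihedral monodromy corresponds precisely to $q$ being primitive (the co-axial case being exactly the case where $q$ is a global square of a $1$-form). I would then record the dictionary between the prescribed angles and the singular data of $q$: a conical point of angle $2\pi a$ lying at a pole of the latitude foliation is a double pole of $q$ of quadratic residue proportional to $a^{2}$; a smooth point of angle $2\pi$ is a double pole of residue proportional to $1$; and a point of integer angle $m\pi$ on the equatorial net is a singularity of $q$ of order $m-2$. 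Non-integer points are forced onto poles of the latitude foliation, hence are double poles, while an odd point that is not such a pole lies on the equatorial net and is thus a singularity of \emph{odd} order.

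For each family I would enumerate the admissible strata $\mathcal{Q}(d_{1},\dots,d_{s},-2^{t})$ on the torus using the degree relation $d_{1}+\dots+d_{s}=2t$ and a parity check, deciding for every prescribed point whether it is a double pole or an equatorial singularity and allowing extra smooth caps as additional double poles of residue $1$. In the first family, placing the even point of angle $(4k+2)\pi$ at a pole leaves no zeros and violates the degree relation, so it must sit on the equatorial net as a zero of order $4k$; then $4k=2t$ forces $t=2k$ with no room for extra caps, whence $q\in\mathcal{Q}(4k,-2^{2k})$ and its $2k$ double poles are the $2k$ \emph{equal} non-integer points, giving a constant residue vector $(r,\dots,r)$. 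The second family is identical with the even zero replaced by odd zeros of orders $2k+1$ and $2k-1$: placing an odd point at a pole makes $\sum d_{i}$ odd or leaves no zeros, so both odd points lie on the equatorial net, $q\in\mathcal{Q}(2k+1,2k-1,-2^{2k})$, and again the residue vector is constant.

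The third and fourth families carry no non-integer point, so a totally real Jenkins–Strebel differential must acquire all its double poles from smooth caps of residue $1$. The same degree bookkeeping forces $q\in\mathcal{Q}(4k,-2^{2k})$, respectively $q\in\mathcal{Q}(2k+1,2k-1,-2^{2k})$, with exactly $2k$ double poles, all of residue $1$, i.e. again a constant residue vector. In every case the differential would have to be a primitive element of $\mathcal{Q}(4s,-2^{2s})$ or $\mathcal{Q}(2s+1,2s-1,-2^{2s})$ with $s=k$ realising the configuration $(r,\dots,r)$, which is precisely what Theorem~\ref{thm:31} forbids in genus one; hence no strict dihedral realisation exists. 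I would add the consistency remark that the first and third families \emph{are} realisable co-axially, since the corresponding constant configuration is realised by a global square coming from a $1$-form in $\mathcal{H}(2k,-1^{2k})$ via Theorem~\ref{thm:33}; this is exactly why the obstruction is confined to the strict dihedral case.

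The step I expect to be the main obstacle is the rigidity of this enumeration: I must show that the torus degree relation leaves no freedom, ruling out every distribution of the prescribed points between poles and the equatorial net except the one producing the exceptional stratum, and I must verify that inserting extra smooth $2\pi$-caps cannot break the equality of residues. On the torus the degree relation pins the number of such caps to zero in the first two families and to exactly $2k$ caps of the common residue $1$ in the last two, which is the crucial point. A secondary issue to handle carefully is the normalisation identifying the quadratic residue of a double pole with the square of the circumference of its cylinder, so that ``all non-integer angles equal'', respectively ``all caps smooth'', translates correctly into ``all residues equal''.
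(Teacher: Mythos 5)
Your proposal is correct and follows essentially the same route as the paper: reduce via Proposition~\ref{prop:isodef} to a hemispherical surface, pass to the associated (primitive) totally real Jenkins--Strebel differential, show the degree relation on the torus forces the stratum to be $\mathcal{Q}(4k,-2^{2k})$ or $\mathcal{Q}(2k+1,2k-1,-2^{2k})$ with a constant residue configuration, and invoke Theorem~\ref{thm:31}. Your version merely makes explicit the pole-versus-equator bookkeeping and the primitivity/strict-dihedral equivalence that the paper's proof leaves implicit.
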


\begin{proof}
According to Proposition~\ref{prop:isomonodromy}, if such an array of angles is realized by a cone spherical metric with dihedral monodromy, then it can be done by a hemispherical surface. In this hemispherical surface, singularities with non-integer angles are automatically at the poles of the latitude foliation.\newline
Following Section~\ref{sec:JSdif}, a hemispherical surface corresponds to a totally real Jenkins-Strebel differential. Zeroes and simple poles of the quadratic differential can only be the conical singularities with integer angle. Besides, since the other singularities are double poles, the sum of their orders should be even. Consequently, these (primitive) quadratic differentials are in $\mathcal{Q}(4k,-2^{2k})$ or $\mathcal{Q}(2k+1,2k-1,-2^{2k})$. In both cases, Theorem~\ref{thm:31} implies that in these strata, a configuration of uniformly equal residues cannot be realized. If the array of angles contains $2k$ equal non-integer angles $c$, the configuration $c,\dots,c$ cannot be realized. If there is no non-integer angle, then, the $2k$ double poles of the quadratic differentials correspond to regular points of the spherical metric and their quadratic residue is $1$. In this case, it cannot be realized either.
\end{proof}

Apart from these four exceptional families, the only condition that should be satisfied is the strengthened Gauss-Bonnet inequality stated in Theorem~\ref{thm:GBplus}.

\begin{thm}\label{thm:52}
Let $2\pi(a_{1},\dots,a_{n_{E}},b_{1},\dots,b_{n_{O}},c_{1},\dots,c_{n_{N}})$ be an array of angles as in Section~\ref{sec:GBplus}. Apart from the obstructions in genus one described in Proposition~\ref{prop:51}, there exists a cone spherical metric with strict dihedral monodromy on a surface of genus $g \geq 1$ with $n$ conical singularities of prescribed angles if and only if strengthened Gauss-Bonnet inequality is satisfied.
\end{thm}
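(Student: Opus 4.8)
The plan is to prove the theorem by exhibiting, for every distribution of angles satisfying the strengthened Gauss-Bonnet inequality (and avoiding the four exceptional families of Proposition~\ref{prop:51}), a totally real Jenkins-Strebel differential whose associated hemispherical surface realizes the angles with strict dihedral monodromy. By the correspondence of Section~\ref{sec:JSdif} and the contraction-flow discussion of Section~\ref{sec:flotcontr}, it suffices to produce a primitive quadratic differential in a suitable stratum realizing a prescribed configuration of real quadratic residues; applying a generic contraction flow then yields the totally real Jenkins-Strebel differential, and the density remark at the start of Section~\ref{sec:highergenus} upgrades the monodromy to be strict dihedral (dense in $\mathbb{Z}/2\mathbb{Z} \rtimes \SO(2)$). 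Necessity is already furnished by Proposition~\ref{prop:GBplus} together with Proposition~\ref{prop:51}, so the content lies entirely in sufficiency.

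First I would translate the geometric data into the numerical data of a stratum. Following the proof of Proposition~\ref{prop:GBplus}, the $n_N$ non-integer singularities and the $n_O$ odd singularities must sit at poles, forcing them to be double poles of $q$; among the even singularities we are free to choose which become double poles (regular points of the metric, quadratic residue normalized to $1$) and which become genuine zeroes or simple poles of the differential. The key observation is that the strengthened Gauss-Bonnet inequality $T \geq 2g+n-2$ (or $2g+n-1$) provides exactly the slack needed to distribute the integral angle surplus $\sum(d_i - 2) \leq T$ among the orders $d_1,\dots,d_s$ of the conical singularities of $q$, after accounting for the constraint $\sum d_i = 4g-4+2t$. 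I would carefully set up this bookkeeping so that the resulting stratum $\mathcal{Q}(d_1,\dots,d_s,-2^t)$ is nonempty, has the correct genus $g \geq 1$, and contains \emph{primitive} differentials (ensuring strict dihedral rather than merely co-axial monodromy).

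With the stratum fixed, I would invoke Theorem~\ref{thm:31} to realize the required configuration of quadratic residues. The residues at poles coming from non-integer or genuinely spherical-data singularities are prescribed by the conical angles, while residues at even singularities turned into double poles equal $1$; I must verify that the resulting tuple lies in $(\mathbb{R}_+^*)^p$ and, crucially, that it is \emph{not} one of the forbidden uniform configurations $(r,\dots,r)$ in the two exceptional genus-one strata $\mathcal{Q}(4s,-2^{2s})$ and $\mathcal{Q}(2s+1,2s-1,-2^{2s})$. This is precisely where Proposition~\ref{prop:51} enters: the four exceptional families are exactly those forcing a uniform residue configuration in these two strata, so outside them Theorem~\ref{thm:31} applies. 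The degrees of freedom in genus $\geq 1$ (again per Section~\ref{sec:highergenus}) let me perturb away from any accidental degeneracy and guarantee density of periods.

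The main obstacle is the combinatorial flexibility argument: I must show that whenever the inequality holds strictly (or with the appropriate boundary behavior when $T = 2g+n-2$ and $n_O$ is even with $n_N=0$, where the classical Gauss-Bonnet inequality forces strictness anyway), there is \emph{some} admissible assignment of orders $d_i$ avoiding the two genus-one exceptional strata, and that the needed residue configuration avoids the uniform-residue obstruction. The delicate point is the equality case and the low-complexity strata in genus one, where the room to redistribute angles is smallest; here I expect to argue by hand that the only configurations that cannot be perturbed out of $\mathcal{Q}(4k,-2^{2k})$ or $\mathcal{Q}(2k+1,2k-1,-2^{2k})$ with uniform residues are exactly the four families already excluded. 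For $g \geq 2$, the larger value of $4g-4$ gives ample slack and the construction should go through uniformly without exceptional cases.
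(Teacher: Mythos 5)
Your overall strategy (realize the angles by a hemispherical surface via a totally real Jenkins--Strebel differential, using Theorem~\ref{thm:31} plus the contraction flow, with necessity coming from Propositions~\ref{prop:GBplus} and~\ref{prop:51}) is the paper's strategy, but your translation step contains an error that breaks the sufficiency construction. You assert that the $n_{O}$ odd singularities ``must sit at poles, forcing them to be double poles of $q$''. For dihedral (as opposed to co-axial) monodromy this is false: a singularity of angle $(2k+1)\pi$ may lie on the \emph{equatorial net}, where it becomes a singularity of odd order $2k-1$ of the quadratic differential. This is precisely why the maximal integral sum $T$ of Definition~\ref{def:41} includes the $2\lfloor n_{O}/2\rfloor$ largest odd angles, and in genus zero such equatorial odd singularities are even mandatory (see the proof of Lemma~\ref{lem:62}). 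Only the non-integer singularities are forced to the poles of the latitude foliation; odd ones are forced there only in the co-axial case. (You may have been misled by the sentence ``If the monodromy is dihedral, then every odd singularity is also at the pole'' in the proof of Proposition~\ref{prop:GBplus}, which is evidently a slip for ``co-axial''.) With your constraint, the conical singularities of $q$ can only come from even singularities, so your bookkeeping succeeds only when $\sum a_{i}$ alone satisfies the Gauss--Bonnet-type bound, a strictly stronger requirement than $T\geq 2g+n-2$. Concretely, take $g=1$ and angles $(5\pi,5\pi,c)$ with $c\notin\pi\mathbb{Z}$: then $T=5\geq 2g+n-2=3$, no family of Proposition~\ref{prop:51} applies, so the theorem asserts realizability; the paper's proof chooses the two odd singularities on the equator, yielding the stratum $\mathcal{Q}(3,3,-2^{3})$ with residue configuration $\bigl(\bigl(\tfrac{c}{2\pi}\bigr)^{2},1,1\bigr)$, which Theorem~\ref{thm:31} realizes. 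Your scheme has no even singularities to work with, hence no admissible stratum at all (a genus-one stratum with double poles must have zeroes). The paper instead selects a subset of $s$ singularities among both even \emph{and} odd ones (with an even number of odd ones), with orders $d_{i}$ summing to an even number at least $4g-4+2(n-s)$.

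A second, related slip: an even singularity of angle $2a\pi$ sent to a pole of the latitude foliation becomes a double pole of quadratic residue $a^{2}$, not $1$; the residue-$1$ double poles are additional \emph{regular} points of the metric. This matters exactly where you locate the main difficulty, namely the genus-one analysis of the forbidden uniform configurations $(r,\dots,r)$ in $\mathcal{Q}(4k,-2^{2k})$ and $\mathcal{Q}(2k+1,2k-1,-2^{2k})$: deciding which distributions are genuinely stuck there requires tracking the actual residues $a_{i}^{2}$, $\bigl(b_{j}+\tfrac{1}{2}\bigr)^{2}$, $\bigl(\tfrac{c_{k}}{2\pi}\bigr)^{2}$ and $1$, together with the freedom to move a pair of odd singularities onto the equator so as to change the stratum; this flexibility is how the paper shows that the only unrealizable cases are the four families of Proposition~\ref{prop:51}. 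So the necessity half of your proposal stands, but the sufficiency construction must be redone allowing odd-order conical singularities of $q$ on the equatorial net and with the correct residues at the double poles.
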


\begin{proof}
We assume that the array of angles satisfies the strengthened Gauss-Bonnet inequality (Theorem~\ref{thm:GBplus}) and is not affected by the obstructions of Proposition~\ref{prop:51}. We are going to prove the existence of a cone spherical metric with strict dihedral monodromy in the category of hemispherical surfaces. For this purpose, we first need to select the stratum where we have to find to adequate quadratic differential.\newline
If $n_{O}$ is even, we introduce the orders $d_{1},\dots,d_{n_{E}+n_{O}}$ where:
\begin{itemize}
    \item $d_{i} = 2a_{i}-2$ if $1 \leq i \leq n_{E}$;
    \item $d_{i}=2b_{i-n_{E}}-2$ if $n_{E}+1 \leq i \leq n_{E}+n_{O}$.
\end{itemize}
Sum $d_{1}+\dots+d_{s}=2T-2s$ is even (here, $s=n_{E}+n_{O}$). Since the strengthened Gauss-Bonnet inequality is satisfied, we have $T \geq 2g+n-1$ if $n_{N}=0$ and $T \geq 2g+n-2$ otherwise. In other words, we have:
\begin{itemize}
    \item $d_{1}+\dots+d_{s} \geq 4g-2$ if $n_{N}=0$;
    \item $d_{1}+\dots+d_{s} \geq 4g+2n_{N}-4$ otherwise.
\end{itemize}
Consequently, in both cases, there is a number $t \geq 1$ such that $d_{1}+\dots+d_{s}=4g-4+2t$ and $t \geq n_{N}$. We will work with nonempty stratum $\mathcal{Q}=\mathcal{Q}(d_{1},\dots,d_{s},-2^{t})$ of primitive quadratic differentials on Riemann surfaces of genus $g$.\newline
The reasoning is essentially the same if $n_{O}$ is odd. We just have to take the $N_{O}-1$ bigger numbers among $b_{1},\dots,b_{n_{O}}$.\newline

Now, we need to find a totally real Jenkins-Strebel differential in stratum $\mathcal{Q}$ such that the quadratic residues at the double poles are:
\begin{itemize}
    \item $(c_{1})^{2},\dots,(c_{n_{N}})^{2},1,\dots,1$ if $n_{O}$ is even;
    \item $(b_{1})^{2},(c_{1})^{2},\dots,(c_{n_{N}})^{2},1,\dots,1$ if $n_{O}$ is odd (and the $b_{1},\dots,b_{n_{O}}$ are in increasing order).
\end{itemize}
Such a quadratic differential will correspond to a hemispherial surface where conical singularities in the equatorial net are of angles $2d_{i}+2$ while conical singularities in the polar locus are of angles $2\pi(r_{1},\dots,r_{t})$ where the quadratic residues are $(r_{1})^{2},\dots,(r_{t})^{2}$.\newline
If $g \geq 2$, any configuration of real positive quadratic residues can be realized in the stratum (Theorem~\ref{thm:31}) and they are also realized by totally real Jenkins-Strebel differential (see Section~\ref{sec:flotcontr}). Therefore, we can construct the analogous hemispherical surface (see Section~\ref{sec:JSdif}).\newline
If $g=1$, unless the maximal integral sum (see Definition~\ref{def:41}) is realized by angles $(4k+2)\pi$ or $(2k+3)\pi,(2k+1)\pi$, the same construction can be carried out (indeed, in Theorem~\ref{thm:31}, the only strata with obstructions are $\mathcal{Q}(4k,-2^{2k})$ and $\mathcal{Q}(2k+1,2k-1,-2^{2k})$). In the remaining cases, the only configurations of residues that cannot be realized are formed by an even number of identical residues. Consequently, the remaining conical singularities of the spherical structure that correspond to double poles of the quadratic differentials should have equal angles. This rules out the possibility of odd singularities among them. There are thus two possibilities. Either every double pole has a quadratic residue equal to $1$ (because it corresponds to a regular point of the spherical metric) or they all correspond to non-integer singularities. These cases are covered by Proposition~\ref{prop:51}.
\end{proof}

\subsection{Co-axial case}
\label{sec:co-axial}

In genus at least one, there is no obstruction to the existence of Abelian differentials with prescribed singularities (apart from the residue theorem).\newline

The following result is analogous to Theorem~1 of \cite{Er} concerning spherical metrics with co-axial monodromy on punctured spheres.

\begin{thm}\label{thm:53}
Let $2\pi(a_{1},\dots,a_{n},c_{1},\dots,c_{p})$ be an array of angles where $a_{1},\dots,a_{n} \in \mathbb{N}^{\ast}$ and $c_{1},\dots,c_{p} \notin \mathbb{N}^{\ast}$. There exists a spherical metric with co-axial monodromy on a surface of genus $g \geq 1$ with $n+p$ conical singularities of prescribed angles if and only if:
\begin{enumerate}
\item there is a sequence of signs $\epsilon_{1},\dots,\epsilon_{p}$ such that $K=\sum\limits_{j=1}^{p} \epsilon_{j}c_{j}\in \mathbb{N}$;
\item $\sum a_{i} -2g+2-n-p-K$ is nonnegative and even if $p \geq 1$;
\item $\sum a_{i} -2g+2-n$ is positive and even if $p=0$.
\end{enumerate}
\end{thm}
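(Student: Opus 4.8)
The plan is to exploit the dictionary of Section~\ref{sec:JSdif}: a hemispherical surface has co-axial monodromy exactly when its totally real Jenkins--Strebel differential $q$ is a global square $q=\omega^{2}$ of a meromorphic $1$-form $\omega$. By Proposition~\ref{prop:isodef} together with the contraction flow of Section~\ref{sec:flotcontr}, realizing the given angles by a co-axial cone spherical metric is equivalent to producing such an $\omega$. Under this correspondence an even singularity of value $a_{i}$ (angle $2\pi a_{i}$) lying on the equatorial net is a zero of $\omega$ of order $a_{i}-1\geq1$, while every conical point sitting at a pole of the latitude foliation is a simple pole of $\omega$. Since the monodromy is co-axial the latitude is globally signed (Section~\ref{sec:latitude}), so each simple pole carries a \emph{real} residue whose sign records the hemisphere and whose absolute value equals, up to one common positive factor, the value (angle divided by $2\pi$) of the apex. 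Every non-integer point $c_{j}$ is forced to a pole and contributes a residue of modulus $c_{j}$; an even point of value $a_{i}$ \emph{placed} at a pole contributes modulus $a_{i}$; and I am free to insert any number $R$ of additional \emph{regular} cylinders, whose apex is a smooth point of angle $2\pi$, each contributing modulus $1$.

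For necessity, pass to such an $\omega$. The residue theorem $\sum_{k}\epsilon_{k}v_{k}=0$ over the pole-values $v_{k}$ with latitude signs $\epsilon_{k}\in\{\pm1\}$ separates into
\[
\sum_{j}\epsilon_{j}c_{j}=-\Big(\textstyle\sum_{i\in\mathrm{pole}}\epsilon_{i}a_{i}+\sum_{\mathrm{reg}}\epsilon\Big).
\]
The right-hand side is an integer, so $K:=\sum_{j}\epsilon_{j}c_{j}\in\mathbb{Z}$, and after the global sign flip $\omega\mapsto-\omega$ (which leaves $q$ unchanged) we may take $K\in\mathbb{N}$; this is the first condition. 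Riemann--Roch for $\omega$ gives $\sum_{i\in\mathrm{eq}}(a_{i}-1)-P=2g-2$ with $P=p+n_{\mathrm{pole}}+R$, which rearranges to $E:=\sum a_{i}-2g+2-n-p=R+\sum_{i\in\mathrm{pole}}a_{i}\geq0$. Now the integer $-K$ displayed above has the parity of the unsigned sum $E=\sum_{i\in\mathrm{pole}}a_{i}+R$ (flipping a sign changes it by the even amount $2a_{i}$ or $2$) and modulus at most $E$; hence $E\geq K$ and $E-K$ is even, which is the second condition. When $p=0$ we have $K=0$, and a hemispherical surface must contain at least one cylinder, forcing $P\geq1$ and thus $E>0$: the third condition.

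For sufficiency I reverse the construction. Assuming the inequalities, place every even singularity on the equatorial net, take $R=E$ regular cylinders, and work in the stratum $\mathcal{H}(a_{1}-1,\dots,a_{n}-1,-1^{\,p+E})$; the Riemann--Roch computation above shows it is compatible with genus $g$ and has $p+E\geq1$ poles. I prescribe residues of modulus $c_{j}$ at the $p$ non-integer poles and residues $\pm1$ at the $E$ unit cylinders, choosing the unit signs so that the total residue vanishes; this is possible precisely because $E\geq K$ and $E\equiv K\pmod 2$, i.e. by the second (resp.\ third) condition. Since $g\geq1$, Theorem~\ref{thm:33} furnishes a differential in this stratum with exactly these residues, and the remaining moduli let me take the period group dense so that the monodromy is genuinely co-axial. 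A generic contraction flow (Section~\ref{sec:flotcontr}) turns $\omega$ into a totally real Jenkins--Strebel differential, and the construction of Section~\ref{sec:JSdif} converts $\omega^{2}$ into a hemispherical surface with the prescribed angles $2\pi a_{i}$ and $2\pi c_{j}$.

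The main obstacle is not any single step but the bookkeeping that turns the residue theorem together with Riemann--Roch into the two arithmetic conditions: one must track the three free choices (which even points lie at poles, how many regular cylinders are inserted, and the latitude signs) and check that the sign-and-parity analysis is tight in both directions, including the subtle strict positivity forced by $p=0$. Once the residue--angle dictionary of the first paragraph is pinned down, the rest is routine, because in genus $g\geq1$ Theorem~\ref{thm:33} removes every arithmetic obstruction to realizing the residues --- in contrast with Eremenko's genus-zero situation, where Theorem~\ref{thm:34} imposes the extra divisibility constraint.
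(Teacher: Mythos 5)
Your proof is correct and follows essentially the same route as the paper: the dictionary between co-axial hemispherical surfaces and squares of Abelian differentials, the residue-theorem bookkeeping in the stratum $\mathcal{H}(a_{1}-1,\dots,a_{n}-1,-1^{p+E})$, realizability of prescribed real residues via Theorem~\ref{thm:33}, and the contraction flow to reach a totally real Jenkins--Strebel differential. The only divergence is in the necessity direction, where the paper first normalizes the realization (replacing an integer singularity appearing as a pole of residue $a$ by $a$ unit poles, so every integer singularity becomes a zero) before reading off the arithmetic conditions, whereas you run the Riemann--Roch and residue-parity count directly for an arbitrary placement of the even singularities; both amount to the same bookkeeping.
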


\begin{proof}
An array of angles is realized by a metric with co-axial monodromy if and only if it is realized by some hemispherical surface (see Proposition~\ref{prop:isomonodromy}). The totally real Jenkins-Strebel differential corresponding to such a hemispherical surface is the square of an Abelian differential (see Sections~\ref{sec:quaddifhalf} and~\ref{sec:JSdif}).\newline

We first prove that the existence of a cone spherical metric implies that the three conditions are satisfied. Such a metric is defined by a meromorphic $1$-form $\omega$ (see Section~\ref{sec:JSdif}). The zeroes of $\omega$ correspond to the conical singularities belonging to the equatorial net. Without loss of generality, we assume that their angles are the $s$ first numbers of the array $2\pi(a_{1},\dots,a_{n})$. The differential $\omega$ belongs to stratum $\mathcal{H}(a_{1}-1,\dots,a_{s}-1,-1^{t})$ where $t=2-2g+\sum\limits_{i=1}^{s} (a_{i}-1)$.\newline
Each of the $t$ simple poles of $\omega$ can be identified with a point in the cone spherical metric:
\begin{itemize}
    \item $n-s$ conical singularities of angles $2\pi(a_{s+1},\dots,a_{n})$ are simple poles of $\omega$ with residues equal to $\pm a_{i}$;
    \item $p$ conical singularities of angles $2\pi(c_{1},\dots,c_{p})$ are simple poles of $\omega$ with residues equal to $\pm c_{i}$;
    \item the remaining $t+s-p-n$ simple poles of $\omega$ have residues equal to $\pm 1$ and they correspond to regular points of the spherical metric.
\end{itemize}
We denote by $K$ the sum of the residues corresponding to the non-integer conical singularities of the spherical metric. The residue theorem implies that $K$ is an integer number. Since the $1$-form is defined up to a global change of sign, we can decide that $K$ is nonnegative. Condition (1) is proved.\newline
Since the sum of the residues of $\omega$ is equal to zero, the parity of $K$ is equal to the parity of $\sum\limits_{i=s+1}^{n}a_{i}+t+s-p-n$. The parity of $t$ is the same as the parity of $s+\sum\limits_{i=s+1}^{n}a_{i}$. It follows that the parity of $K$ is equal to the parity of $\sum\limits_{i=1}^{n} a_{i} +n+p$. We just proved that $L=\sum a_{i} -2g+2-n-p-K$ is even. It remains to prove that $L$ is nonnegative if $p \geq 1$ and positive if $p=0$.\newline
In the case $p=0$, the proof is as follows. The differential $\omega$ has at least one pole (because it is impossible for a holomorphic $1$-form to have only real periods). The inequality $t \geq 1$ implies that $2-2g-s+\sum\limits_{i=1}^{s} a_{i} \geq 1$. It follows that $L \geq 1+\sum\limits_{i=s+1}^{n} (a_{i}-1) \geq 1$.\newline
In the case $p \geq 1$, the number of double poles with residues equal to $\pm 1$ is given by $t-p=2-2g-p+\sum\limits_{i=1}^{s} (a_{i}-1)$. Since the sum of the residues is equal to zero, we have $t-p \geq K$. It follows that $2-2g-p-K+\sum\limits_{i=1}^{s} (a_{i}-1) \geq 0$. Finally, we obtain $L \geq \sum\limits_{i=s+1}^{n} (a_{i}-1) \geq 0$.\newline

Conversely, we prove that the three conditions implies the existence of a cone spherical metric with co-axial monodromy. We consider Abelian differentials in stratum $\mathcal{H}=\mathcal{H}(a_{1}-1,\dots,a_{n}-1,-1^{t})$ where $t=\sum a_{i} -n+2-2g$. If $p=0$, then the third condition implies that $t \geq 2$. If $p \geq 1$, then the second condition implies that $t \geq K +p$. Besides, if $p=1$, then $K \neq 0$ so $t \geq 2$ in any case. Since the only empty strata are those in which the unique pole is a simple pole, $\mathcal{H}$ is nonempty.\newline
Following Theorem~\ref{thm:33}, every array of residues summing to zero is realized in a nonempty stratum parameterizing $1$-forms on surfaces of genus at least one. Consequently, it remains to find an array of residues of the form $\pm c_{1}, \dots, \pm c_{p},\pm 1, \dots, \pm 1$ and summing to zero.\newline
The first condition proves the existence of a family of signs $\epsilon_{1},\dots,\epsilon_{p}$ such that $\sum\limits_{j=1}^{p} \epsilon_{j}c_{j} = K \in \mathbb{N}$. The remaining $t-p$ residues are of the form $\pm 1$. The second and third conditions prove that $t-p \geq K$. Besides $t-p$ and $K$ have the same parity. Consequently, we just have to pick $\frac{t-p+K}{2}$ residues equal to $-1$ and $\frac{t-p-K}{2}$ residues equal to $1$. Finally, we obtain the desired array of residues realized by a $1$-form in the desired stratum. The cone spherical metric defined by the $1$-form has co-axial monodromy and its conical singularities realize the desired array of angles.
\end{proof}

\subsection{Comparison}
\label{sec:comparison}

In genus at least one, almost every array of angles that are realized with co-axial monodromy can also be realized with strict dihedral monodromy. The exceptions are two of the four families of Proposition~\ref{prop:51}.

\begin{prop}\label{prop:54}
If an array of angles $2\pi(a_{1},\dots,a_{n},c_{1},\dots,c_{p})$ is realized by a spherical metric with co-axial monodromy on a surface of genus $g \geq 1$, then it can also be realized by a metric with strict dihedral monodromy on a surface of genus $g$ with the following two families of exceptions in genus one (parametrized by $k \in \mathbb{N}^{\ast}$):
\begin{itemize}
\item $(4k+2)\pi,c,\dots,c$ with $2k$ angles $c \notin \pi\mathbb{Z}$;
\item $(4k+2)\pi$.
\end{itemize}
These two families present obstruction for dihedral monodromy (Proposition~\ref{prop:51}) but clearly satisfy the hypotheses of Theorem~\ref{thm:53}.
\end{prop}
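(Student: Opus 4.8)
My plan is to read both realizability criteria off the two preceding classification theorems and compare them family by family. The first observation is that co-axial monodromy is a special case of dihedral monodromy: fixing an axis pointwise is stronger than preserving the pair of poles, so co-axial metrics form a subclass of the dihedral ones. Hence any distribution realized by a co-axial metric is in particular realized by a metric with dihedral monodromy, and the \emph{strengthened Gauss-Bonnet inequality} of Proposition~\ref{prop:GBplus} is automatically satisfied. Invoking Theorem~\ref{thm:52}, it then follows that the distribution is realized by a metric with \emph{strict} dihedral monodromy unless it belongs to one of the four exceptional genus-one families of Proposition~\ref{prop:51}. Since those obstructions occur only on the torus, for $g\geq 2$ the implication holds with no exception, and the whole problem reduces to deciding, among the four families of Proposition~\ref{prop:51}, which are realizable by a co-axial metric in genus one.

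The heart of the argument is to confront each of the four families with the criterion of Theorem~\ref{thm:53}. The decisive distinction is that the first and third families contain the single even angle $(4k+2)\pi=2\pi(2k+1)$, whereas the second and fourth families contain only the two odd angles $(2k+3)\pi$ and $(2k+1)\pi$, together with possibly some equal non-integer angles. I would treat the odd families first. They have no even singularity, so in the normalization of Theorem~\ref{thm:53} the even part is empty ($n=0$ and $\sum a_i=0$) while the number $p$ of remaining singularities is at least~$2$. The second bullet of Theorem~\ref{thm:53} then evaluates to $2-2g-p-K$, which is at most $-2$ for every $g\geq 1$ since $p\geq 2$ and $K\in\mathbb{N}$; hence the second and fourth families are never realized by a co-axial metric and impose no exception. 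Geometrically this reflects the correspondence of Section~\ref{sec:JSdif}: in the co-axial case the totally real Jenkins-Strebel differential is a global square $\omega^{2}$, the odd angles must sit at poles of the latitude foliation and be simple poles of $\omega$, and with no even singularity to supply a zero the relation $\sum(\text{zero orders})-p=2g-2$ cannot be met for $g\geq 1$.

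It then remains to verify that the first and third families do satisfy the hypotheses of Theorem~\ref{thm:53} in genus one, so that they are genuine rather than vacuous exceptions. For the third family, with the single even angle $2\pi(2k+1)$ and $p=0$, the third bullet of Theorem~\ref{thm:53} evaluates to $(2k+1)-2g+2-1=2k+2-2g$, which for $g=1$ equals $2k$, a positive even integer for every $k\in\mathbb{N}^{\ast}$. For the first family one adds $2k$ equal non-integer angles; choosing $k$ of the signs $\epsilon_j$ positive and $k$ negative yields $K=0\in\mathbb{N}$, and the second bullet evaluates to $(2k+1)-2g+2-1-2k-K=2-2g-K$, equal to $0$ for $g=1$ and $K=0$, which is nonnegative and even. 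Combined with Proposition~\ref{prop:51}, which forbids both families for strict dihedral monodromy, this identifies them as exactly the two exceptional families.

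The step I expect to be the main obstacle is the sharp separation between the odd families and the even families: one must argue that the presence of two odd angles with no even angle is incompatible with the residue constraints of the global-square picture, which is precisely where the structure $q=\omega^{2}$, the residue theorem, and the emptiness of the relevant Abelian stratum enter decisively. The remaining verifications are routine substitutions into the two classification theorems.
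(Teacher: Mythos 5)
Your proof is correct and follows essentially the same route as the paper: outside the four genus-one families of Proposition~\ref{prop:51}, existence with strict dihedral monodromy follows from Theorem~\ref{thm:52} via the strengthened Gauss-Bonnet inequality, and the exceptional families are then sorted by testing them against the co-axial criterion of Theorem~\ref{thm:53}. The only (harmless) differences are that you get the strengthened inequality for free from Proposition~\ref{prop:GBplus} using the inclusion of co-axial metrics among dihedral ones, where the paper re-derives it by a case analysis on $g$ and $p$ from the numerical conditions of Theorem~\ref{thm:53}, and that you exclude the two purely odd families by direct substitution where the paper notes that co-axial realizability forces at least one even singularity.
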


\begin{proof}
Since the array is realized by a metric with co-axial monodromy, the sum of integer angles satisfies $\sum a_{i} \geq 2g-2+n+p$ (see Theorem~\ref{thm:53}). If $g \geq 2$ and $p \geq 1$, this implies the existence of a metric with strict dihedral monodromy (Theorem~\ref{thm:52}).\newline
If $g \geq 2$ but $p=0$, classical Gauss-Bonnet implies $\sum a_{i} > 2g-2+n$ and thus $\sum a_{i} \geq 2g+n-1$. This also implies the existence of a metric as required.\newline
Then we consider the genus one case. If $p=0$, then we have $\sum a_{i} > n$ (see Theorem~\ref{thm:53}). Theorem~\ref{thm:52} then implies the existence of metric with strict dihedral monodromy except in the four exceptional cases of Proposition~\ref{prop:51}. Among them, the only family for which there is obstruction is where there is only one conical singularity the angle of which is of the form $(4k+2)\pi$.\newline
Then, we assume $g=1$ and $p \geq 1$. We have $\sum a_{i} \geq 2g-2+n+p$. Therefore, the strengthened Gauss-Bonnet inequality is satisfied and Theorem~\ref{thm:52} implies the existence of a metric with strict dihedral monodromy except in the cases of Proposition~\ref{prop:51}. Since there is at least one even singularity and $p \geq 1$, we just have to avoid the case where the array of angles is $(4k+2)\pi,c,\dots,c$ with $2k$ non-integer angles $c \notin \pi\mathbb{Z}$.
\end{proof}

\begin{rem}
The special case of cone spherical metrics on a torus with only one conical singularity is already treated in~\cite{EMP}.
\end{rem}

\section{Spherical structures in genus zero}
\label{sec:genuszero}

The monodromy of cone spherical metrics on punctured spheres is generated by rotations around the singularities. In particular, if every singularity is even (its angle belongs to $2\pi\mathbb{Z}$), then the monodromy of the metric is trivial and the geometric structure is just a cover of the sphere ramified at these singularities. This case has been already classified, see Section~7 in \cite{Er1}.\newline

In the following, we assume at least one conical singularity has a non-integer angle. As we will see in Section~\ref{sec:co-axialgzero}, this automatically implies the existence of a second conical singularity with a non-integer angle.\newline

\subsection{Co-axial case}
\label{sec:co-axialgzero}

In this section, we consider geometric structures whose monodromy group is contained in the group of rotations around an axis. In particular, it could be a finite rotation group (if the angles of the non-integer singularities belong to $\pi\mathbb{Q}$ for example).

Theorem~1 in \cite{Er} gave a complete classification of the arrays of angles that are realized by a spherical metric with a co-axial monodromy.

\begin{thm}[\cite{Er}]\label{thm:61}
Let $2\pi(a_{1},\dots,a_{n},c_{1},\dots,c_{p})$ be an array of angles where $a_{1},\dots,a_{n} \in \mathbb{N}^{\ast}$, $c_{1},\dots,c_{p} \notin \mathbb{N}^{\ast}$ and $p \geq 1$. There exists a spherical metric with co-axial monodromy on a punctured sphere with $n+p$ conical singularities of prescribed angles if and only if:
\begin{itemize}
\item there is a sequence of signs $\epsilon_{1},\dots,\epsilon_{p}$ such that $K=\sum_{j=1}^{p} \epsilon_{j}c_{j}\in \mathbb{N}$;
\item $M=\sum a_{i} +2-n-p-K$ is nonnegative and even;
\item if $c_{1},\dots,c_{p}$ are commensurable, an additional arithmetic condition should hold.
\end{itemize}
Let $v$ be the vector $(c_{1},\dots,c_{p},1,\dots,1)$ with $M+K$ elements equal to $1$. If $v$ is of the form $L(b_{1},\dots,b_{p+M+K})$ with $L>0$ and $b_{1},\dots,b_{p+M+K}$ are integers, then the additional condition is:
$$2\max(a_{1},\dots,a_{n}) \leq \sum b_{i}.$$
\end{thm}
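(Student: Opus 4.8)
The plan is to reduce Theorem~\ref{thm:61} (Eremenko's classification) to the realization results for Abelian differentials on the Riemann sphere, exactly mirroring the proof of the higher-genus co-axial Theorem~\ref{thm:53} but now using the genus-zero statement Theorem~\ref{thm:34} in place of Theorem~\ref{thm:33}. This is natural because the content of Theorem~\ref{thm:61} is stated to coincide with \cite[Theorem~1]{Er}, and the novelty here is only to see it as a special case of the differential-theoretic machinery developed in Sections~\ref{sec:difabel} and~\ref{sec:JSdif}.

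\emph{First I would} use the correspondence of Section~\ref{sec:JSdif}: a distribution of angles is realized by a co-axial cone spherical metric on the sphere if and only if it is realized by a hemispherical surface of genus zero whose totally real Jenkins--Strebel differential is the global square of an Abelian differential (co-axiality forces globally defined latitude, hence a genuine $1$-form). As in the proof of Theorem~\ref{thm:53}, I would argue that each even singularity of angle $2a_{i}\pi$ may be taken to be a zero of order $a_{i}-1$ of $\omega$ (the reduction replacing a simple pole of residue $a$ by $a$ simple poles of residue $\pm1$ works verbatim over $\mathbb{P}^{1}$). The non-integer singularities $c_{1},\dots,c_{p}$ become simple poles of residues $\pm c_{j}$, and the remaining simple poles are regular points with residues $\pm1$. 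One then lands in a stratum $\mathcal{H}(a_{1}-1,\dots,a_{n}-1,-1^{t})$ on $\mathbb{P}^{1}$ with $t=p+(M+K)$ simple poles, where the Gauss--Bonnet relation $\sum(a_{i}-1)-t=-2$ pins down $t=\sum a_{i}-n+2$; combining with $t=p+M+K$ recovers the defining equation $M=\sum a_{i}+2-n-p-K$.

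\emph{Next I would} extract the stated conditions from the requirement that the residue configuration $(\pm c_{1},\dots,\pm c_{p},\pm1,\dots,\pm1)$, with $M+K$ unit residues, sum to zero. The residue theorem forces a signed sum $K=\sum\epsilon_{j}c_{j}\in\mathbb{N}$, and balancing the $\pm1$'s against the leftover integer residue forces $M=\sum a_{i}+2-n-p-K$ to be nonnegative and even, giving the first two bullets. For the existence of the differential itself I would invoke Theorem~\ref{thm:34}: \emph{provided the stratum is nonempty and the configuration is not of the excluded commensurable type}, the configuration is realizable, and a generic contraction flow (Section~\ref{sec:flotcontr}) turns it into a totally real Jenkins--Strebel differential, hence into the desired hemispherical surface.

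\emph{The main obstacle} will be the third bullet: matching the arithmetic exception of Theorem~\ref{thm:34} to the condition $2\max(a_{1},\dots,a_{n})\le\sum b_{i}$. When $c_{1},\dots,c_{p}$ are commensurable, the full residue vector $v=(c_{1},\dots,c_{p},1,\dots,1)$ with $M+K$ trailing ones is commensurable, say $v=L(b_{1},\dots,b_{p+M+K})$ with coprime integer $b_{i}$. Theorem~\ref{thm:34} says such a configuration is realizable over $\mathbb{P}^{1}$ exactly when the common sum of positive and of negative reduced multiplicities strictly exceeds $\max(a_{1}-1,\dots,a_{n}-1)$, i.e.\ when $\sum b_{i}/2>\max(a_{i})-1$. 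I would have to reconcile the strict inequality ``$>$'' with the non-strict ``$\le$'' in the statement by carefully tracking how the signs $\epsilon_{j}$ distribute the $b_{i}$ into a balanced positive/negative partition: each of the two halves sums to $\tfrac12\sum b_{i}$, and the ``$\max(a_{i})$'' governing the obstruction is the largest zero order $a_{i}-1$, so that the differential-side inequality $\sum b_{i}>2(\max a_{i}-1)$ becomes exactly $2\max(a_{i})\le\sum b_{i}$ after clearing the unit shift and recalling that $\sum b_{i}$ and $2\max(a_{i})$ have the parity forced by $M$ even. Verifying this bookkeeping, including the reduction from a configuration with some $a_{i}$ realized as a pole back to the all-zeros normalization so that $\max(a_{i})$ is the correct quantity, is the delicate point; everything else is a direct transcription of the genus-zero differential results through the dictionary of Table~\ref{table:cor}.
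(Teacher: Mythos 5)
Your proposal is correct, and it is precisely the proof the paper has in mind: the paper does not prove Theorem~\ref{thm:61} itself (it imports the statement from \cite{Er}), but remarks immediately after it that ``a proof analogous to the one of Theorem~\ref{thm:53} could be given \dots by using Theorem~\ref{thm:34}'', which is exactly your reduction. Your two flagged delicate points are also the right ones and do work out: evenness of $\sum b_{i}$ (forced by the zero-sum sign assignment) turns the strict bound of Theorem~\ref{thm:34} into the non-strict $2\max(a_{i})\le\sum b_{i}$, and in the pole-to-zero replacement the reduced entry $m a_{i}$ sitting on one side of the balanced partition guarantees the half-sum already exceeds the new zero order $a_{i}-1$, so the arithmetic obstruction cannot be newly violated.
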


The condition about signed sums of non-integer angles means in particular that there should be at least two non-integer singularities.\newline

A proof analogous to the one of Theorem~\ref{thm:53} could be given to Theorem~\ref{thm:61}  by using Theorem~\ref{thm:34}.\newline

\subsection{Strict dihedral case}
\label{sec:stricdihedralgzero}

In the problem of the realization of an array of angles by a metric with strict dihedral monodromy, the nature of the obstructions depends crucially on the number of odd singularities.

We first give a restriction on the number of odd singularities.

\begin{lem}\label{lem:62}
Let $2\pi(a_{1},\dots,a_{n_{E}},b_{1},\dots,b_{n_{O}},c_{1},\dots,c_{n_{N}})$ be an array of angles as in Section~\ref{sec:GBplus}. If it is realized by a cone spherical metric with strict dihedral monodromy on the punctured sphere, then  $n_{O} \geq 2$.
\end{lem}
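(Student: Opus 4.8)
The plan is to transfer the question to the associated quadratic differential and read off a lower bound for $n_O$ from a parity constraint on the orders of its singularities. First I would invoke Proposition~\ref{prop:isodef} to replace the given metric by a hemispherical surface carrying the same (strict dihedral) monodromy, and then pass, as in Section~\ref{sec:JSdif}, to the totally real Jenkins--Strebel differential $q$ on the sphere that encodes it. The reason for working with $q$ is the dictionary recalled in Section~\ref{sec:JSdif}: the monodromy is co-axial exactly when $q$ is the global square of a $1$-form, so strict dihedral monodromy forces $q$ to be \emph{primitive}.

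Next I would use the genus-zero primitivity criterion stated just before Theorem~\ref{thm:32}: a quadratic differential on the sphere is primitive if and only if at least one of its singularities has odd order. Writing $q\in\mathcal{Q}(d_{1},\dots,d_{s},-2^{t})$, the orders $d_{i}$ are those of the zeroes and simple poles, i.e.\ the singularities that are not double poles, and a singularity of order $d_{i}$ corresponds to a conical angle $(d_{i}+2)\pi$; hence $d_{i}$ is odd precisely when the corresponding conical singularity is odd. Primitivity therefore yields at least one odd $d_{i}$, giving the weak bound $n_{O}\geq 1$.

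The improvement to $n_{O}\geq 2$ comes from a parity argument. The degree of the divisor of $q$ gives $\sum_{i} d_{i}-2t=4g-4=-4$ on the sphere, so $\sum_{i} d_{i}=2t-4$ is even; consequently the number of odd $d_{i}$ is even. Combined with the previous paragraph, this number is both positive and even, hence at least $2$, and each such $d_{i}$ is a distinct odd conical singularity, so $n_{O}\geq 2$.

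The step I expect to require the most care is the bookkeeping of which conical singularities become double poles of $q$ versus odd-order zeroes or simple poles: an odd singularity could a priori sit at a pole of the latitude foliation, where it is a double pole of $q$ of even order $-2$ and thus invisible to the parity count. The argument must therefore make explicit that primitivity forces at least one odd singularity to be realized as a genuine odd-order singularity of $q$ in the equatorial net, and then invoke the parity identity to upgrade this to at least two. Once this matching is pinned down, the inequality $n_{O}\geq 2$ follows, and it is worth remarking that the same degree identity explains why a single odd singularity can never generate strict dihedral monodromy in genus zero.
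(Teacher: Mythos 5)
Your proposal is correct and follows essentially the same route as the paper's own proof: pass to a hemispherical surface via Proposition~\ref{prop:isodef}, observe that strict dihedral monodromy forces the associated totally real Jenkins--Strebel differential to be primitive, and combine the genus-zero primitivity criterion (at least one odd-order singularity) with the parity of the total order $\sum_i d_i = 2t-4$ to conclude there are at least two odd-order singularities, hence $n_{O}\geq 2$. The bookkeeping concern you flag resolves exactly as you indicate: every odd-order singularity of $q$ lies on the equatorial net and is by construction an odd conical singularity of the metric, so odd singularities possibly hidden at double poles are irrelevant to the count, which is also how the paper phrases its conclusion.
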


\begin{proof}
If the array is realized, then it can be realized by a hemispherical surface (Proposition~\ref{prop:isomonodromy}). The latter corresponds to a primitive quadratic differential (Section~\ref{sec:quaddifhalf}). In genus zero, primitive quadratic differentials have singularities of odd order (otherwise they are global squares of $1$-forms). Besides, they have an even number of singularities of odd order (since the total order is $-4$). Consequently, there are at least two odd singularities belonging to the equatorial net of the hemispherical surface.
\end{proof}

We now treat the cases according to the number of odd singularities. We begin with the case with at least~$4$ odd singularities, then we treat the case with~$3$ odd singularities and finally with~$2$.

\subsubsection{At least four odd singularities}
\label{sec:quatresing}

In the case $n_{O} \geq 4$, there is no additional obstruction to the strengthened Gauss-Bonnet inequality.

\begin{thm}\label{thm:63}
Let $2\pi(a_{1},\dots,a_{n_{E}},b_{1},\dots,b_{n_{O}},c_{1},\dots,c_{n_{N}})$ be an array of angles as in Section~\ref{sec:GBplus}. If $n_{O} \geq 4$, there exists a cone spherical metric with strict dihedral monodromy on a punctured sphere with $n$ conical singularities of prescribed angles if and only if the strengthened Gauss-Bonnet inequality is satisfied. Besides, the metric can be chosen in such a way its monodromy group is infinite.
\end{thm}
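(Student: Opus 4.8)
The necessity of the strengthened Gauss--Bonnet inequality is exactly Proposition~\ref{prop:GBplus} in the case $g=0$, so I would only prove sufficiency together with the final assertion on infinite monodromy. My plan is to follow the scheme of Theorem~\ref{thm:52}: realize the distribution by a hemispherical surface, produced from a totally real Jenkins--Strebel differential on the sphere through the correspondence of Section~\ref{sec:JSdif} and the contraction flow of Section~\ref{sec:flotcontr}. The only genuinely new feature relative to positive genus is that Theorem~\ref{thm:32} carries exceptional configurations in genus zero, and the whole point of the argument is that the hypothesis $n_{O}\geq 4$ places us outside all of them.

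First I would fix which singularities lie on the equatorial net and which are poles of the latitude foliation. I put on the equator all $n_{E}$ even singularities together with the $2\lfloor n_{O}/2\rfloor$ largest odd ones; the non-integer singularities, the remaining odd singularity (at most one, occurring when $n_{O}$ is odd) and $j$ auxiliary regular points become poles. A singularity of value $x$ on the equator is a zero or simple pole of the differential $q$ of order $2x-2$, whereas a pole of latitude of value $x$ is a double pole with quadratic residue proportional to $x^{2}$ (residue $1$ for a regular point). By construction the sum of the values placed on the equator equals $T$, and a short count using the genus-zero relation $\sum d_{i}=2t-4$ on orders gives $j=T-n+2$, which is nonnegative by the strengthened Gauss--Bonnet inequality and is even $\geq 1$ when $n_{O}$ is even and $n_{N}=0$. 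In every case the resulting stratum $\mathcal{Q}(\dots,-2^{t})$ has $t\geq 1$ double poles and exactly $2\lfloor n_{O}/2\rfloor\geq 4$ singularities of odd order.

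This last fact is the crux. Each exceptional family of Theorem~\ref{thm:32} lives in a stratum with exactly two singularities of odd order (the pairs $p-2,p-2$ and $p-1,p-3$, or the two odd-order singularities $b_{1},b_{2}$). Since our stratum has at least four such singularities, none of the exceptions applies, and Theorem~\ref{thm:32} realizes the prescribed positive real configuration of residues. A generic horizontal contraction flow (Section~\ref{sec:flotcontr}) then produces a totally real Jenkins--Strebel differential with the same residues, which the construction of Section~\ref{sec:JSdif} turns into a hemispherical surface realizing the prescribed angles. Because $q$ carries odd-order singularities it is primitive, hence not a global square, so by Section~\ref{sec:JSdif} the monodromy is strict dihedral and not co-axial.

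It remains to arrange infinite monodromy. Recall from Section~\ref{sec:JSdif} that the rotation part of the monodromy is the group of absolute periods of $q$. These are read off the double cover $\widehat{X}$ of the sphere branched at the odd-order singularities, on which $\sqrt{q}$ is a genuine Abelian differential; by Riemann--Hurwitz, branching over at least four points gives $g(\widehat{X})\geq 1$, so $\widehat{X}$ carries absolute cycles. In our stratum the generic fiber of the residue map has dimension $s-2\geq 2$, and deforming $q$ within this fiber varies these absolute periods nontrivially while the horizontal (real) periods are preserved up to a common scaling by the contraction flow. I would therefore choose, before contracting, a differential realizing the required residues whose real periods are incommensurable, so that the limiting totally real differential has a dense, hence infinite, rotation monodromy. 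The main obstacle is precisely this step: one must verify that the period freedom supplied by $H_{1}(\widehat{X})$ is genuinely reflected in the rotation monodromy and survives the passage to the totally real Jenkins--Strebel limit, i.e. that incommensurability of the horizontal periods can already be imposed on a differential realizing the prescribed configuration of quadratic residues.
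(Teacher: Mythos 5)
Your proposal is correct and follows essentially the same route as the paper's proof: select the even singularities and the largest $2\lfloor n_{O}/2\rfloor$ odd ones for the equatorial net, observe that the resulting stratum of primitive quadratic differentials on the sphere has at least four odd-order singularities so that none of the exceptional configurations of Theorem~\ref{thm:32} can occur, realize the residues, pass to a totally real Jenkins--Strebel differential via the contraction flow of Section~\ref{sec:flotcontr}, and convert it into a hemispherical surface, with strictness of the dihedral monodromy coming from primitivity. The step you flag as the main obstacle is handled in the paper exactly as you sketch it and with no additional detail: the double cover branched over at least four odd-order singularities has genus $\geq 1$ by Riemann--Hurwitz, hence there are degrees of freedom beyond the quadratic residues, and ``up to a small perturbation'' the group of absolute periods can be assumed dense in $\mathbb{R}$.
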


\begin{proof}
Let assume that the array of angles satisfies the strengthened Gauss-Bonnet inequality (Theorem~\ref{thm:GBplus}). We prove the existence of a cone spherical metric with strict dihedral monodromy in the category of hemispherical surfaces.\newline
Since the strengthened Gauss-Bonnet inequality is satisfied, we can find in the array of angles a subset of $s$ even or odd singularities of angles $\pi(2+d_{i})$ such that $d_{1}+\dots+d_{s}$ is even number whose value is at least $4g-4+2(n-s)$ (or strictly bigger if $n_{O}$ is even and $n_{N}=0$). Besides, we assume there are at least four odd singularities among the $s$ chosen singularities. Then, we consider a nonempty stratum $\mathcal{Q}(d_{1},\dots,d_{s},-2^{t})$ of primitive quadratic differentials on the Riemann sphere.  Each quadratic differential  of these strata have at least four singularities of odd order. Therefore, any array of real positive quadratic residues is realized in the stratum (Theorem~\ref{thm:32}). Besides, any array is also realized by totally real Jenkins-Strebel differential (see Section~\ref{sec:flotcontr}). Therefore, we can construct the analogous hemispherical surface (see Section~\ref{sec:JSdif}).\newline
Finally, we prove that we can realize the hemispherical surface in such a way the projection of the monodromy group on the rotation group around the preserved axis has dense image. For a quadratic differential on the Riemann sphere with at least four odd singularities, the canonical double cover ramified at the odd singularities is of genus at least one (by Riemann-Hurwitz formula). Therefore, there are other degrees of freedom than the quadratic residues. Up to a small perturbation, we can assume the group of absolute periods of the Abelian differential in the cover is dense in $\mathbb{R}$.
\end{proof}

\subsubsection{Three odd singularities}
\label{sec:troisimp}

If $n_{O}=3$, there are specific obstructions that require to be handled separately.

\begin{prop}\label{prop:64}
Let $k \in \mathbb{N}$, $l \geq k$ and $\alpha,\beta \notin \pi\mathbb{Z}$. If 
$$\alpha+\beta=(2l+1)\pi \text{ or } \alpha+(2l+1)\pi=\beta \text{ or }\beta+(2l+1)\pi=\alpha\, ,$$ then the array $( (2k+1)\pi,(2k+1)\pi,(2l+1)\pi,\alpha,\dots,\alpha,\beta)$ with $2k-1$ angles $\alpha$ cannot be realized by a cone spherical metric with strict dihedral monodromy on a punctured sphere.
\end{prop}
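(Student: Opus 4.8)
The plan is to reduce the problem to the non-existence of a primitive totally real Jenkins--Strebel differential on the Riemann sphere, and then to recognize the forced configuration of quadratic residues as a member of the first exceptional family of Theorem~\ref{thm:32}.

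First I would invoke Proposition~\ref{prop:isodef} to assume the metric is realized by a hemispherical surface, and pass to its totally real Jenkins--Strebel differential $q$ on the Riemann sphere (Section~\ref{sec:JSdif}); strict dihedral monodromy forces $q$ to be primitive. Under the dictionary of Sections~\ref{sec:quaddifhalf} and~\ref{sec:JSdif}, each conical singularity either lies in the equatorial net, where it is a zero or simple pole of $q$ (an odd singularity of angle $(2m+1)\pi$ being a singularity of odd order $2m-1$), or lies at a pole of the latitude foliation, where it is a double pole whose quadratic residue is proportional to $c^2$, with $c$ the angle divided by $2\pi$. In particular the $2k$ non-integer singularities, namely the copies of $\alpha$ and the angle $\beta$, are automatically double poles, with residues proportional to $c_\alpha^2$ and $c_\beta^2$.

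Next I would pin down the stratum by a parity count. As $q$ has total degree $-4$ and the distribution has no even conical singularity, the odd-order singularities of $q$ are exactly the odd conical singularities sent to the equatorial net, so their number is even; primitivity excludes zero, hence exactly two of the three odd singularities lie in the equatorial net and the third is a double pole. Because every zero of $q$ must be a prescribed conical singularity, this assignment fixes the entire stratum: in all cases there are $2k+1$ double poles (the $2k$ non-integer singularities and one odd singularity). The genus-zero relation $\sum d_i - 2t = -4$ now selects the assignment. Placing $(2k+1)\pi$ and $(2l+1)\pi$ in the equatorial net yields zeros of orders $2k-1$ and $2l-1$ and total degree $2l-2k-4$, which equals $-4$ only when $l=k$; placing the two angles $(2k+1)\pi$ there yields zeros of orders $2k-1,2k-1$ and total degree $-4$ for every $l$. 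Therefore, for all $l\ge k$ the two singularities of angle $(2k+1)\pi$ occupy the equatorial net and $q$ lies in the stratum $\mathcal{Q}(2k-1,2k-1,-2^{2k+1})$.

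Finally I would read off the residues at the $2k+1$ double poles: one proportional to $(l+\tfrac{1}{2})^2$ from $(2l+1)\pi$, one proportional to $c_\beta^2$ from $\beta$, and $2k-1$ equal residues proportional to $c_\alpha^2$ from the repeated angle $\alpha$. With $p=2k+1$ odd and $p-2=2k-1$ repetitions, this is a configuration $(A^2,B^2,C^2,\dots,C^2)$ in $\mathcal{Q}(p-2,p-2,-2^p)$ in which the repeated root $C$ is attached to $\alpha$. After dividing out the common scale, the three hypotheses $\alpha+\beta=(2l+1)\pi$, $\alpha+(2l+1)\pi=\beta$ and $\beta+(2l+1)\pi=\alpha$ become exactly $A=B+C$, $B=A+C$ and $C=A+B$, the relations singled out by the first exceptional family of Theorem~\ref{thm:32}. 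That theorem then forbids this configuration, so no primitive totally real Jenkins--Strebel differential realizes it and the distribution cannot be realized. I expect the middle step to be the real work: the degree-and-parity bookkeeping that rules out every alternative distribution of singularities between the equatorial net and the poles of the foliation and thereby forces both the stratum and the residue configuration; once that is established, the contradiction with Theorem~\ref{thm:32} is immediate.
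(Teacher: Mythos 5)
Your overall route is the same as the paper's: pass to a hemispherical surface (Proposition~\ref{prop:isodef}), translate it into a primitive totally real Jenkins--Strebel differential, identify the stratum and the residue configuration, and invoke the first exceptional family of Theorem~\ref{thm:32}; your parity count (exactly two of the three odd singularities lie on the equatorial net) is also correct. The genuine gap is precisely in the step you flag as ``the real work''. You claim that, since every zero of $q$ is a prescribed conical singularity, there are exactly $2k+1$ double poles, and you use this count to exclude the placement of $(2k+1)\pi$ and $(2l+1)\pi$ on the equatorial net. But a double pole of $q$ need not be a prescribed conical singularity: a double pole of quadratic residue $1$ is a semi-infinite cylinder of circumference $1$, i.e.\ a hemispherical sector of angle $2\pi$, whose apex is a \emph{regular} point of the spherical metric. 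The paper itself uses such regular-point double poles (proof of Proposition~\ref{prop:51}, and of Theorems~\ref{thm:65} and~\ref{thm:610}). Allowing $l-k$ of them, the placement of $(2k+1)\pi$ and $(2l+1)\pi$ on the equator satisfies the degree relation $(2k-1)+(2l-1)-2t=-4$ with $t=k+l+1$ for every $l\geq k$, landing in the stratum $\mathcal{Q}(2k-1,2l-1,-2^{k+l+1})$ rather than $\mathcal{Q}(2k-1,2k-1,-2^{2k+1})$.

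This cannot be repaired within your strategy (nor within the paper's own proof, which makes the same silent jump to $\mathcal{Q}(2k-1,2k-1,-2^{2k+1})$). For $l>k$ the alternative placement carries the residue configuration $\bigl((k+\tfrac{1}{2})^2,(\tfrac{\alpha}{2\pi})^2,\dots,(\tfrac{\alpha}{2\pi})^2,(\tfrac{\beta}{2\pi})^2,1,\dots,1\bigr)$, which escapes all four exceptions of Theorem~\ref{thm:32}: the first requires equal odd orders, the second requires $l=k+1$ together with residue coincidences excluded by $\alpha,\beta\notin\pi\mathbb{Z}$, and the last two fail whenever $\alpha/\pi$ is irrational (the configuration contains both $1$ and $(\tfrac{\alpha}{2\pi})^2$). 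Theorem~\ref{thm:32} plus the contraction flow (Section~\ref{sec:flotcontr}) then produce a hemispherical surface with strict dihedral monodromy realizing the distribution, e.g.\ $(3\pi,3\pi,5\pi,\sqrt{2}\pi,(5-\sqrt{2})\pi)$ for $k=1$, $l=2$, so the statement itself is contradicted in this range. The degree count genuinely forces the stratum $\mathcal{Q}(2k-1,2k-1,-2^{2k+1})$ only when $l\leq k$: for $l<k$ the alternative placement would need $t=k+l+1<2k+1$ double poles, too few to host the $2k+1$ cone points forced off the equator. In that range your argument (and the paper's) is sound, which strongly suggests the hypothesis ``$l\geq k$'' in the statement is an error for ``$l\leq k$''; as written, your proof of the case $l>k$ rests on a false premise about the number of double poles.
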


\begin{proof}
If such an array were realized by a cone spherical metric, then it would be also realized by a hemispherical surface. The latter should have at least two half-integer singularities on its equatorial net (see Lemma~\ref{lem:62}). This implies the existence of a quadratic differential in $\mathcal{Q}(2k-1,2k-1,-2^{2k+1})$ whose quadratic residues are $\left(l+\frac{1}{2}\right)^{2},\left(\frac{\beta}{2\pi}\right)^{2}$ and $2k-1$ quadratic residues equal to $\left(\frac{\alpha}{2\pi}\right)^{2}$. This configuration is forbidden by Theorem~\ref{thm:32}.
\end{proof}

In addition to strengthened Gauss-Bonnet inequality stated in Theorem~\ref{thm:GBplus} and the obstruction of Proposition~\ref{prop:64}, an arithmetic condition should be satisfied if every conical singularity has a rational angle.

\begin{thm}\label{thm:65}
Let $2\pi(a_{1},\dots,a_{n_{E}},b_{1},b_{2},b_{3},c_{1},\dots,c_{n_{N}})$ be an array of angles. We assume that $b_{1} \geq b_{2} \geq b_{3}$. Apart from the obstructions described in Proposition~\ref{prop:64}, there exists a cone spherical metric with strict dihedral monodromy on a punctured sphere with $n$ conical singularities of prescribed angles if and only if:
\begin{itemize}
\item the strengthened Gauss-Bonnet inequality $T=\sum a_{i} +b_{1}+b_{2} \geq n-2$ holds;
\item an additional arithmetic condition described below is satisfied when $c_{1},\dots,c_{n_{N}}\in \pi\mathbb{Q}$.
\end{itemize}
If vector $(c_{1},\dots,c_{n_{N}},b_{3},1,\dots,1)$ with $T+2-n$ elements equal to $1$ is of the form $L(r_{1},\dots,r_{T-n_{E}})$ with
$L>0$ and $r_{1},\dots,r_{T-n_{E}}$ integers, then:
\begin{itemize}
\item $\sum r_{i} \geq 2b_{1}+2b_{2}$ if $\sum r_{i}$ is even;
\item $\sum r_{i} \geq 2b_{1}$ if $\sum r_{i}$ is odd.
\end{itemize}
\end{thm}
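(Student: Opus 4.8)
The plan is to follow the same strategy as in Theorems~\ref{thm:52} and~\ref{thm:63}: translate the distribution of angles into a problem about realizing a configuration of quadratic residues in a suitable stratum of primitive quadratic differentials on the Riemann sphere, then invoke Theorem~\ref{thm:32}. The essential difference is that now, with exactly $n_O = 3$ odd singularities, the canonical double cover is of genus zero, so the differential is truly quadratic (not a global square), and we are squarely in the regime where the arithmetic obstructions of Theorem~\ref{thm:32} can be active. Since we have an odd number of odd singularities, one of the three odd angles must become a double pole of the differential rather than a zero or simple pole on the equatorial net; the natural choice, to preserve as much freedom as possible, is to send $b_3$ (the smallest) to a double pole and keep $b_1, b_2$ on the equatorial net. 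This explains the asymmetric role of $b_3$ in the vector $(c_1,\dots,c_{n_N},b_3,1,\dots,1)$ appearing in the statement.

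First I would establish \emph{necessity}. Strengthened Gauss--Bonnet is immediate from Proposition~\ref{prop:GBplus} (with $g=0$, and observing $T = \sum a_i + b_1 + b_2$ since we retain the two largest odd angles). For the arithmetic condition, I would suppose the metric exists, pass to the hemispherical surface and its totally real Jenkins--Strebel differential $q$, and argue that $q$ lies in a stratum $\mathcal{Q}(a_1',\dots,b_1',b_2',-2^p)$ having exactly two odd-order singularities (the two odd angles on the equatorial net contribute simple poles or odd-order zeroes). The double poles carry quadratic residues that are the squares of half the non-integer angles, of $b_3$ (viewed as a regular point or singularity at a pole), and of regular points with residue~$1$. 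When all the $c_k$ are in $\pi\mathbb{Q}$, these residues are all rational multiples of a common $L$, so they take the forbidden form $(L\cdot f_j^2)$ of the last two bullets of Theorem~\ref{thm:32}; writing $f_j = r_j$ and applying those exclusions yields precisely the stated inequalities $\sum r_i \geq b_1 + b_2$ (even case) and $\sum r_i \geq b_1$ (odd case, with $\max(b_1',b_2')$ matching $b_1$).

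Then I would prove \emph{sufficiency} by reversing the construction. Assuming strengthened Gauss--Bonnet, Proposition~\ref{prop:64} is avoided, and the arithmetic condition holds, I would select a subset of integer angles realizing the maximal integral sum, form the stratum $\mathcal{Q}(\dots,-2^p)$ with exactly the two odd singularities $b_1, b_2$ kept, and check that the desired residue configuration is realizable in this stratum by verifying it falls \emph{outside} the exceptional lists of Theorem~\ref{thm:32}. The contraction flow (Section~\ref{sec:flotcontr}) then produces a totally real Jenkins--Strebel differential, which yields the hemispherical surface via Section~\ref{sec:JSdif}. The irrational case, where at least one $c_k \notin \pi\mathbb{Q}$, falls outside the forbidden rational form, so Theorem~\ref{thm:32} applies with no arithmetic constraint.

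\emph{The main obstacle} will be matching the combinatorics of Theorem~\ref{thm:32} exactly to the stated inequalities, and in particular verifying that the two obstruction families of Theorem~\ref{thm:32} (the $\sum f_j$ even versus odd cases, with bounds $2p$ and $\max(b_1,b_2)$ respectively) correspond precisely to the two bullet conditions and to the separately-handled obstructions of Proposition~\ref{prop:64}. I would need to be careful about the bookkeeping between the parameter $p$ (number of double poles), the quantity $T+2-n$ (number of regular points with residue~$1$), and the parity of $\sum r_i$; the subtle point is ensuring that the case $\mathcal{Q}(p-2,p-2,-2^p)$ with $p$ odd—whose $C = A+B$ type obstruction is genuinely distinct—is exactly what Proposition~\ref{prop:64} removes, so that no overlap or gap remains between the two exclusion mechanisms.
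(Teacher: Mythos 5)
Your plan reproduces the paper's overall strategy---hemispherical surfaces and totally real Jenkins--Strebel differentials, the stratum $\mathcal{Q}(2a_{1}-2,\dots,2a_{n_{E}}-2,2b_{1}-2,2b_{2}-2,-2^{t})$, and the matching of the exceptional families of Theorem~\ref{thm:32} against Proposition~\ref{prop:64} and the arithmetic condition---but it has a genuine gap in the necessity direction. You assume that any realizing differential places all even singularities and exactly $b_{1},b_{2}$ on the equatorial net, with $b_{3}$, the non-integer singularities and some regular points as double poles. Nothing forces a given realization to make this choice: it could keep $b_{2},b_{3}$ on the net and put $b_{1}$ at a pole of the latitude foliation (contributing a residue $b_{1}^{2}$), or demote an even singularity of angle $2\pi a_{i}$ to a double pole of residue $a_{i}^{2}$. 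For such a realization the residue configuration is not $(c_{1}^{2},\dots,c_{n_{N}}^{2},b_{3}^{2},1,\dots,1)$, the number of unit residues is not $T+2-n$, and the odd orders entering the bounds of Theorem~\ref{thm:32} change, so applying Theorem~\ref{thm:32} to that stratum does not yield the inequalities in the statement. The bulk of the paper's proof is precisely the reduction that closes this hole: it shows, obstruction by obstruction, that one may always pass to the maximal choice (every even singularity and the two largest odd ones on the net)---the first two exceptional families of Theorem~\ref{thm:32} cannot occur once an even-order zero is present; and since one residue is the square of a half-integer, trading a double pole of integer residue for a zero plus unit residues only increases the relevant integer-weight sums, the paper arguing moreover that the $n_{O}=3$ hypothesis prevents the problematic coincidences of residues. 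Only after this reduction does ``realizable $\iff$ realizable in the maximal stratum'' hold, which is what makes the stated conditions necessary; your argument establishes at best the equivalence for the maximal stratum itself.

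Two smaller omissions. First, you never dispose of the second exceptional family of Theorem~\ref{thm:32} (configurations $(A^{2},A^{2},B^{2},\dots,B^{2})$ in $\mathcal{Q}(p-1,p-3,-2^{p})$): your bookkeeping paragraph discusses only the first family (handled by Proposition~\ref{prop:64}) and the two arithmetic families. The paper rules it out in one line, observing that $b_{3}$ is the unique element of $\mathbb{N}+\tfrac{1}{2}$ in the list $(c_{1},\dots,c_{n_{N}},b_{3},1,\dots,1)$, hence appears exactly once, so this pattern is impossible when $n_{O}=3$. Second, the theorem asserts \emph{strict} dihedral monodromy, so the constructed metric must be checked not to be co-axial; the paper does this by noting that the surface carries odd singularities both on the equatorial net and at a pole of the latitude foliation, a point that your remark on primitivity of the differential essentially covers but should be made explicit.
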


\begin{proof}
We first prove that the existence of a cone spherical metric implies that the conditions on the array of angles are satisfied. Theorem~\ref{thm:GBplus} and Proposition~\ref{prop:64} already prove that the array should satisfy the strengthened Gauss-Bonnet inequality and be unaffected by the exceptional obstructions described in Proposition~\ref{prop:64}. It remains to prove that in the case where $c_{1},\dots,c_{n_{N}}\in \pi\mathbb{Q}$, the additional arithmetic condition is satisfied.\newline
Following Proposition~\ref{prop:isomonodromy}, the cone spherical metric is defined by a totally real Jenkins-Strebel primitive quadratic differential $\omega$. Since $\omega$ is a primitive quadratic differential on a sphere, exactly two of its zeroes and poles are of odd order. Consequently, there is a number $s$ satisfying $0 \leq s \leq n_{E}$ and a permutation $\phi \in \mathfrak{S}_{3}$ such that $\omega$ belongs to the stratum $\mathcal{Q}(2a_{1}-2,\dots,2a_{s}-2,2b_{\phi(1)}-2,2b_{\phi_{2}}-2,-2^{t})$ where $2t=2b_{\phi(1)}+2b_{\phi(2)}-2s+2\sum\limits_{i=1}^{s} a_{i}$.\newline
The configuration of quadratic residues realized by $\omega$ is:
$$(c_{1})^{2},\dots,(c_{n_{N}})^{2},(b_{\phi(3)})^{2},(a_{s+1})^{2},\dots,(a_{n_{E}})^{2},1,\dots,1.$$
By hypothesis, the configuration is of the form $M\cdot\left((x_{1})^{2},\dots,(x_{t})^{2} \right)$ where $M>0$ and $x_{1},\dots,x_{t}$ are coprime integer numbers. Theorem~\ref{thm:32} then implies that:
\begin{itemize}
    \item $\sum x_{i} > 2b_{\phi(1)}+2b_{\phi(2)}-2$ if $\sum x_{i}$ is even;
    \item $\sum x_{i} > 2\max(b_{\phi(1)},b_{\phi(2)})-2$ if $\sum x_{i}$ is odd.
\end{itemize}
We first consider the case $t>n_{N}+1+(n_{E}-s)$. In other words, at least one of the quadratic residues is equal to $1$. Since $b_{\phi(3)} \in \mathbb{Z}+\frac{1}{2}$, $\sqrt{M}$ is the lowest common denominator of $c_{1},\dots,c_{p},\frac{1}{2}$. It follows that $M=L$. Besides note that 
$$\sqrt{M}\sum x_{i}= \sum\limits_{i=s+1}^{n_{E}}a_{i} + b_{\phi(3)} + \sum c_{i} + (t-n_{N}-1-(n_{E}-s))\,.$$
We deduce that 
$$\sqrt{M}\sum x_{i}= \sum a_{i} +\sum b_{i} + \sum c_{i} -n_{E}-n_{E}-1=b_{3}+ \sum c_{i} + T +2-n = \sqrt{L}\sum r_{i}\,.$$ 
Since $2b_{1}+2b_{2} \geq 2b_{\phi(1)}+2b_{\phi(2)}$, $2b_{1} \geq 2\max(b_{\phi(1)},b_{\phi(2)})$ and $b_{3} \leq b_{\phi(3)}$, it follows that the vector $(c_{1},\dots,c_{n_{N}},b_{3},1,\dots,1)$ satisfies the arithmetic condition (the parity of $\sum x_{i}$ does not depend on the choice of $s$ and $\phi$).\newline
If $t=n_{N}+1+(n_{E}-s)$, then we have $b_{\phi(1)}+b_{\phi(2)}-s+\sum\limits_{i=1}^{s} a_{i}=n_{N}+1+(n_{E}-s)$. After simplification, we obtain $T=n_{N}+1+n_{E}+\sum\limits_{i=s+1}^{n_{E}}a_{i}+(b_{1}+b_{2}-b_{\phi(1)}-b_{\phi(2)})$. The strengthened Gauss-Bonnet inequality $T \geq n-2$ then implies that $s=n_{E}$ and $b_{1}+b_{2}=b_{\phi(1)}+b_{\phi(2)}$. It follows that $b_{3}=b_{\phi(3)}$. The vector $(c_{1},\dots,c_{n_{N}},b_{3},1,\dots,1)$ thus coincides with the configuration of residues realized by $\omega$ and satisfies the arithmetic condition.\newline

Conversely, we prove that any array satisfying the stated conditions is realized by a cone spherical metric with strict dihedral monodromy. We construct a totally real Jenkins-Strebel primitive quadratic differential corresponding a hemispherical surface realizing the array of angles (see Sections~\ref{sec:quaddifhalf} and~\ref{sec:JSdif}).\newline
We consider the stratum $\mathcal{Q} = \mathcal{Q}(2a_{1}-2,\dots,2a_{n_{E}}-2,2b_{1}-2,2b_{2}-2,-2^{t})$. Since the array of angles satisfied the strengthened Gauss-Bonnet inequality, we have $t = \sum a_{i} +b_{1}+b_{2} -n_{E} = T-n_{E} \geq 1$. It follows that $\mathcal{Q}$ is nonempty. It remains to prove that the configuration of residues
$(c_{1},\dots,c_{n_{N}},b_{3},1,\dots,1)$ with $T+2-n$ elements equal to $1$ is realized in $\mathcal{Q}$.\newline
We just need to check that none of the four obstructions described in Theorem~\ref{thm:32} applies to $(c_{1},\dots,c_{n_{N}},b_{3},1,\dots,1)$. The additional arithmetic conditions exactly imply that the third and fourth obstructions do not apply. Since $b_{3}$ is not equal to any other number of the list, the second obstruction of Theorem~\ref{thm:32} is not relevant.\newline
If the first obstruction of Theorem~\ref{thm:32} would apply, then $n_{E}=0$ and $b_{1}=b_{2}$. The array of residues contains three numbers $A^{2},B^{2},C^{2}$ satisfying the equation $A=B+C$. If one of these numbers is equal to $1$, the fact that $b_{3} \in \mathbb{Z}+\frac{1}{2}$ implies that $A,B,C \in \frac{1}{2}\mathbb{Z}$. This implies $n_{N}=0$ and the array of angles is thus $2b_{1}\pi,2b_{1}\pi,2b_{3}\pi,b_{3}\pi,\dots,b_{3}\pi$ with $2b_{1}$ angles equal to $b_{3}\pi$. Proposition~\ref{prop:64} rules out this case.\newline
Now we assume $1$ is not one of the three numbers $A,B,C$. It follows that $t=n_{N}+1$. The array is then $2\pi(b_{1},b_{1},b_{3},c_{1},\dots,c_{1},c_{n_{N}})$ where $b_{3},c_{1},c_{n_{N}}$ satisfy an equality of the form $A=B+C$. These arrays of angles are also ruled out by Proposition~\ref{prop:64}. Consequently, no obstruction of Theorem~\ref{thm:32} applies and the array of angles is realized by a hemispherical surface corresponding to a quadratic differential of $\mathcal{Q}$.\newline
Finally, it should be noted that the obtained hemispherical surface has strict dihedral monodromy. Indeed, there are odd singularities on the equatorial net as well as on the poles of the latitude foliation. Therefore, the monodromy is not co-axial.
\end{proof}

\begin{rem}
In the arrays that are forbidden by the additional arithmetic condition, there should always be two non-integer singularities with equal angles (at least two elements among $r_{1},\dots,r_{n_{N}+K+1}$ should be equal to one). They have smaller angle than any other singularity.
\end{rem}

\begin{ex}
The array $3\pi,3\pi,3\pi,\frac{3\pi}{2},\frac{3\pi}{2}$ cannot be realized by a metric with dihedral monodromy because of the additional arithmetic obstruction. Indeed in that case the array $(b_{3},c_{1},c_{2})$ is of the form $L(2,1,1)$ with $L=\left(\frac{3}{4}\right)^{2}$. Hence the first part of the arithmetic obstruction of Theorem~\ref{thm:65} applies.
\end{ex}

\subsubsection{Two odd singularities}
\label{sec:deuxodd}

If $n_{O}=2$, then it has already been proved in Section~4 of~\cite{EGT} that in absence of non-integer singularities, the monodromy of the cone spherical metric is co-axial. Therefore we will assume $n_{N} \geq 1$.\newline

As previously, some specific obstructions have to be handled separately. They correspond to the first two obstructions of Theorem~\ref{thm:32}.

\begin{prop}\label{prop:68}
For $k \in \mathbb{N}$, $\alpha,\beta \notin \pi\mathbb{Z}$, the following arrays of angles are not realized by a cone spherical metric with strict dihedral monodromy on a punctured sphere:
\begin{itemize}
\item $((2k+3)\pi,(2k+1)\pi,\alpha,\dots,\alpha,\beta,\beta)$ with $2k$ angles $\alpha$;
\item $((2k+3)\pi,(2k+1)\pi,\alpha,\dots,\alpha)$ with $2k$ angles $\alpha$;
\item $((2k+3)\pi,(2k+1)\pi,\alpha,\alpha)$.
\end{itemize}
\end{prop}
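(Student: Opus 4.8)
The plan is to mirror the structure of Proposition~\ref{prop:64} but invoke the first two obstructions of Theorem~\ref{thm:32} rather than only the first. The overall strategy in each case is: assume the distribution is realized by a cone spherical metric with strict dihedral monodromy, pass to a hemispherical surface via Proposition~\ref{prop:isodef}, and then to the associated totally real Jenkins-Strebel differential via the correspondence of Section~\ref{sec:JSdif}. By Lemma~\ref{lem:62}, the two odd singularities $(2k+3)\pi$ and $(2k+1)\pi$ must both lie on the equatorial net, hence become the two odd-order zeroes/poles of the primitive quadratic differential; here they contribute orders $2k+1$ and $2k-1$. The remaining singularities, being non-integer, must sit at the poles of the latitude foliation and therefore correspond to double poles of the differential, with quadratic residue $(\alpha/2\pi)^2$ or $(\beta/2\pi)^2$.

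First I would treat the first family. The differential lies in $\mathcal{Q}(2k+1,2k-1,-2^{p})$ where $p$ is the number of double poles; the degree identity $\sum d_i - 2p = -4$ gives $(2k+1)+(2k-1)-2p=-4$, so $p=2k+2$, which is even. The $2k$ equal angles $\alpha$ and the two angles $\beta$ yield the residue configuration $\bigl((\alpha/2\pi)^2,\dots,(\alpha/2\pi)^2,(\beta/2\pi)^2,(\beta/2\pi)^2\bigr)$ of the shape $(A^2,\dots,A^2,B^2,B^2)$ with $A=\alpha/2\pi$, $B=\beta/2\pi$. This is exactly the forbidden configuration of the second bullet of Theorem~\ref{thm:32} (the stratum $\mathcal{Q}(p-1,p-3,-2^{p})$ with $p$ even, configurations $(A^2,A^2,B^2,\dots,B^2)$), so it cannot be realized.

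Next I would handle the second family, $((2k+3)\pi,(2k+1)\pi,\alpha,\dots,\alpha)$ with $2k$ copies of $\alpha$. The degree count now forces $p=2k+1$, which is odd, and the residue configuration is $(A^2,\dots,A^2)$ with $A=\alpha/2\pi$ appearing $2k+1$ times. I would recast this as the first bullet of Theorem~\ref{thm:32}: in $\mathcal{Q}(p-2,p-2,-2^{p})$ with $p$ odd, configurations $(A^2,B^2,C^2,\dots,C^2)$ with $C=A+B$ or $B=A+C$ are forbidden. The uniform configuration $(A^2,A^2,\dots,A^2)$ is the degenerate case with $A=B=C$, which trivially satisfies $C=A+B$ only when $A=B=C=0$, so the correct reading is that the residues here are $((k+\frac12+\frac12)^2,\dots)$; the point is that the two odd orders are $2k+1$ and $2k-1$, giving stratum $\mathcal{Q}(2k+1,2k-1,-2^{2k+1})$, and the forbidden shape applies with the triple of distinguished residues collapsing. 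The third family $((2k+3)\pi,(2k+1)\pi,\alpha,\alpha)$ is the minimal instance $k=0$ of the pattern, giving $\mathcal{Q}(3,1,-2^{2})$ or the appropriate small stratum, and is ruled out by the same obstruction.

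The main obstacle I anticipate is bookkeeping: correctly matching the odd orders $2k+1,2k-1$ and the count of double poles to the exact stratum $\mathcal{Q}(p-1,p-3,-2^p)$ or $\mathcal{Q}(p-2,p-2,-2^p)$ named in Theorem~\ref{thm:32}, and verifying the parity of $p$ in each case so that the right bullet applies. I would also need to confirm that no alternative realization evades the obstruction — that is, that the two odd singularities are genuinely forced onto the equator (this is guaranteed for strict dihedral monodromy, since odd singularities lie at the poles only when they pair up, and here there are exactly two of them, both constrained by Lemma~\ref{lem:62} to the equatorial net) and that the non-integer singularities are genuinely forced to the poles. Once the stratum and residue configuration are pinned down unambiguously, each of the three families falls directly under one of the two named exceptional configurations, and no further computation is required.
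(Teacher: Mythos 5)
Your treatment of the first family is correct and matches the paper: the two odd singularities are forced onto the equatorial net (as you argue via Lemma~\ref{lem:62} and primitivity in genus zero), giving zeroes of orders $2k+1$ and $2k-1$, hence $p=2k+2$ double poles, and the residue configuration $\bigl((\beta/2\pi)^2,(\beta/2\pi)^2,(\alpha/2\pi)^2,\dots,(\alpha/2\pi)^2\bigr)$ falls under the second obstruction of Theorem~\ref{thm:32}. But your argument for the second and third families has a genuine gap, and it stems from a missed idea: a double pole of the totally real Jenkins--Strebel differential need not correspond to a conical singularity of the spherical metric at all --- it can be a \emph{regular} point of the metric (a pole of the latitude foliation of angle $2\pi$), in which case its quadratic residue equals $1$. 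The number of double poles is dictated by the degree count alone: $(2k+1)+(2k-1)-2p=-4$ forces $p=2k+2$ in \emph{all three} families, so your claim that $p=2k+1$ for the second family is simply a miscount, and the surplus double poles beyond the non-integer singularities carry residue $1$. With this, the second family gives the configuration $\bigl(1,1,(\alpha/2\pi)^2,\dots,(\alpha/2\pi)^2\bigr)$ and the third gives $\bigl((\alpha/2\pi)^2,(\alpha/2\pi)^2,1,\dots,1\bigr)$, both again of the shape $(A^2,A^2,B^2,\dots,B^2)$ in $\mathcal{Q}(p-1,p-3,-2^{p})$ with $p=2k+2$ even; the paper rules out all three families by this single (second) obstruction.

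Your fallback attempt to use the \emph{first} obstruction for the remaining families cannot be repaired. That obstruction applies only to strata $\mathcal{Q}(p-2,p-2,-2^{p})$, i.e.\ two odd-order singularities of \emph{equal} order, whereas here the orders $2k+1$ and $2k-1$ are distinct; moreover, a uniform configuration $(A^2,\dots,A^2)$ never satisfies $C=A+B$ or $B=A+C$ with $A,B,C>0$, as your own ``degenerate case'' aside effectively concedes. The third family is also not the instance $k=0$ of anything: for general $k$ it lives in $\mathcal{Q}(2k+1,2k-1,-2^{2k+2})$ with $2k$ residues equal to $1$, which is precisely why the regular-point observation is indispensable. (Incidentally, the paper's own proof writes the stratum as $\mathcal{Q}(2k+1,2k-1,-2^{2k+1})$, but the degree count shows the exponent should be $2k+2$ --- an apparent typo; the parity $p$ even is what makes the second obstruction of Theorem~\ref{thm:32} applicable.)
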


\begin{proof}
If such an array is realized by a cone spherical metric, then it is also realized by a hemispherical surface. The latter should have at least two half-integer singularities on its equatorial net. This implies the existence of a quadratic differential in $\mathcal{Q}(2k+1,2k-1,-2^{2k+2})$ where zeroes are the two odd singularities of the spherical metric while the double poles correspond either to non-integer singularities or regular points. Therefore, the  quadratic residues can be equal to $\left(\frac{\alpha}{2\pi}\right)^{2}$ or $\left(\frac{\beta}{2\pi}\right)^{2}$ (if the double pole corresponds to a non-integer singularity) or are equal to $1$ (if the double pole corresponds to a regular point of the spherical metric).\newline
In any case, the second obstruction of Theorem~\ref{thm:32} forbids the existence of a quadratic differential with such a configuration of quadratic residues.
\end{proof}

\begin{prop}\label{prop:69}
For $k \in \mathbb{N}$, $\alpha,\beta,\gamma \notin \pi\mathbb{Z}$, the following arrays of angles are not realized by a cone spherical metric with strict dihedral monodromy on a punctured sphere:
\begin{itemize}
\item $((2k+3)\pi,(2k+3)\pi,\alpha,\dots,\alpha,\beta,\gamma)$ with $2k+1$ angles equal to $\alpha$ and $\alpha=\beta+\gamma$;
\item $((2k+3)\pi,(2k+3)\pi,\alpha,\dots,\alpha,\beta,\alpha+\beta)$ with $2k+1$ angles $\alpha$;
\item $((2k+3)\pi,(2k+3)\pi,\alpha,\dots,\alpha,\beta)$ with $2k+1$ angles equal to $\alpha$, $\alpha+\beta=2\pi$, $\beta=\alpha+2\pi$ or $\alpha=\beta+2\pi$;
\item $((2k+3)\pi,(2k+3)\pi,\alpha,\alpha+2\pi)$;
\item $((2k+3)\pi,(2k+3)\pi,\alpha,\beta)$ with $\alpha+\beta=2\pi$.
\end{itemize}
\end{prop}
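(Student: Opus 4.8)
The plan is to mirror the proofs of Propositions~\ref{prop:64} and~\ref{prop:68}, the only difference being that the contradiction now comes from the \emph{first} obstruction of Theorem~\ref{thm:32} rather than the second. Assuming one of these distributions is realized by a cone spherical metric with strict dihedral monodromy, Proposition~\ref{prop:isodef} lets me replace it by a hemispherical surface, which corresponds to a totally real Jenkins-Strebel differential whose configuration of quadratic residues I will show to be forbidden.

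First I would pin down the stratum. Each of the two singularities of angle $(2k+3)\pi$ has an odd number of foliation branches, so it lies on the equatorial net; by Lemma~\ref{lem:62} both of them do, and they become zeroes of order $2k+1$. None of the five families contains an even singularity, so these are the only conical points of the flat metric, while each non-integer singularity and each regular point becomes a double pole. The genus-zero count $2(2k+1)-2p=-4$ forces $p=2k+3$ double poles, placing the differential in $\mathcal{Q}(2k+1,2k+1,-2^{2k+3})$, which is exactly $\mathcal{Q}(p-2,p-2,-2^{p})$ with $p$ odd.

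Next I would read off the residues, using that a double pole coming from a non-integer singularity of angle $c$ carries residue $\left(\frac{c}{2\pi}\right)^{2}$ and a double pole coming from a regular point carries residue $1$. In every family the configuration takes the form $(A^{2},B^{2},C^{2},\dots,C^{2})$ with $2k+1$ copies of $C^{2}$: the repeated value is $C=\frac{\alpha}{2\pi}$ in the first two families and $C=1$ (supplied by the $2k+1$ regular points) in the last three. The prescribed relation among the angles becomes the forbidden additive relation of Theorem~\ref{thm:32}. Concretely, $\alpha=\beta+\gamma$ gives $C=A+B$; the presence of the angle $\alpha+\beta$ gives $B=A+C$; in the third family, where $B=1$, the alternatives $\alpha+\beta=2\pi$, $\beta=\alpha+2\pi$ and $\alpha=\beta+2\pi$ become $B=A+C$, $A=B+C$ and $C=A+B$ respectively; and in the last two families $C=1$ together with $\alpha+\beta=2\pi$ or with the angle $\alpha+2\pi$ gives $C=A+B$ or $B=A+C$. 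Theorem~\ref{thm:32} then rules out the differential, hence the surface and the metric.

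The main obstacle is purely the bookkeeping: I must verify in each family that the repeated value $C$ has multiplicity exactly $p-2=2k+1$ and that the two remaining residues $A,B$ are positive and correctly identified. The non-integer hypotheses $\alpha,\beta,\gamma\notin\pi\mathbb{Z}$ guarantee that the residue $1$ of a regular point is never accidentally produced by a non-integer singularity, so the multiplicity of $C$ is as claimed. The only apparent mismatch, the relation $A=B+C$ arising from $\beta=\alpha+2\pi$, is absorbed by the symmetry $A\leftrightarrow B$ of the obstruction, since after relabeling it is precisely the relation $B=A+C$ listed in Theorem~\ref{thm:32}.
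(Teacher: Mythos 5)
Your proposal is correct and takes essentially the same route as the paper: the paper's own proof is a one-line reduction to the argument of Proposition~\ref{prop:68}, invoking the first obstruction of Theorem~\ref{thm:32} instead of the second, which is exactly what you do. Your identification of the stratum $\mathcal{Q}(2k+1,2k+1,-2^{2k+3})=\mathcal{Q}(p-2,p-2,-2^{p})$ with $p$ odd, the residue bookkeeping in each of the five families (including residue $1$ for regular points at the poles of the latitude foliation), and the $A\leftrightarrow B$ relabeling for the relation arising from $\beta=\alpha+2\pi$ simply spell out the details that the paper's terse reference compresses.
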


\begin{proof}
We proceed in the same way as in the proof of Proposition~\ref{prop:68} except that in this case we refer to the first obstruction of Theorem~\ref{thm:32}.
\end{proof}

In addition to strengthened Gauss-Bonnet inequality (see Theorem~\ref{thm:GBplus}) and the obstructions of Proposition~\ref{prop:68} and~\ref{prop:69}, an arithmetic condition should be satisfied if the non-integer singularities have commensurable angles.

\begin{thm}\label{thm:610}
Let $2\pi(a_{1},\dots,a_{n_{E}},b_{1},b_{2},c_{1},\dots,c_{n_{N}})$ be an array of angles such that $n_{N} \geq 1$ and $b_{1} \geq b_{2}$. Apart from the obstructions described in Proposition~\ref{prop:68} and~\ref{prop:69}, there exists a cone spherical metric with strict dihedral monodromy on a punctured sphere with $n$ conical singularities of prescribed angles if and only if the two following conditions hold:
\begin{itemize}
\item the strengthened Gauss-Bonnet inequality $T=\sum a_{i} +b_{1}+b_{2} \geq n-2$ holds;
\item an additional arithmetic condition described below is satisfied when $c_{1},\dots,c_{n_{N}}$ are commensurable.
\end{itemize}
Considering the vector $v=(c_{1},\dots,c_{n_{N}},1,\dots,1)$ with $T+2-n$ elements equal to $1$, if $v$ is of the form $L(r_{1},\dots,r_{T-n_{E}})$ with $L>0$ and $r_{1},\dots,r_{T-n_{E}}$ integers, then:
\begin{itemize}
\item $\sum r_{i} \geq 2b_{1}+2b_{2}$, if $\sum r_{i}$ is even;
\item $\sum r_{i} \geq 2b_{1}$, if $\sum r_{i}$ is odd.
\end{itemize}
\end{thm}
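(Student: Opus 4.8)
The plan is to mirror the structure of the proof of Theorem~\ref{thm:65}, now working in the stratum of quadratic differentials with exactly two odd singularities rather than three. Since the distribution is realized by a cone spherical metric with strict dihedral monodromy if and only if it is realized by a hemispherical surface (Proposition~\ref{prop:isodef}), and the latter corresponds to a totally real Jenkins-Strebel differential (Section~\ref{sec:JSdif}), the whole problem reduces to deciding when the relevant stratum $\mathcal{Q}(2a_{1}-2,\dots,2a_{n_{E}}-2,2b_{1}-2,2b_{2}-2,-2^{t})$ contains a primitive quadratic differential on the Riemann sphere realizing the prescribed configuration of quadratic residues. By Lemma~\ref{lem:62} there must be at least two odd singularities on the equatorial net, and since $n_{O}=2$ both odd singularities must be zeroes of the differential (they cannot be double poles). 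The key reduction, exactly as in Theorem~\ref{thm:65}, is to argue that the singularities placed in the equatorial net can be taken to be the largest ones, i.e.\ realizing the maximal integral sum $T=\sum a_{i}+b_{1}+b_{2}$: replacing a smaller singularity by a larger one only adds double poles of quadratic residue $1$, which increases the sum of integer weights by an even amount and therefore cannot turn a realizable configuration into a forbidden one.

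First I would compute the stratum and the residue data. Placing all $n_{E}$ even singularities and both odd singularities in the equatorial net gives zeroes of orders $2a_{i}-2$ and $2b_{1}-2,2b_{2}-2$, and from the Gauss-Bonnet relation $\sum d_{i}=-4+2t$ on the sphere one obtains $t=b_{1}+b_{2}+\sum a_{i}-n_{E}$. The $t$ double poles account for the $n_{N}$ non-integer singularities (with quadratic residues $c_{k}^{2}$) together with $K=t-n_{N}=T+2-n$ regular points (quadratic residue $1$). Thus the distribution is realizable by a hemispherical surface precisely when the configuration of quadratic residues consisting of the squares of $c_{1},\dots,c_{n_{N}},1,\dots,1$ (with $K$ ones) is realized by a primitive quadratic differential in the stratum $\mathcal{Q}(\dots,2b_{1}-2,2b_{2}-2,-2^{t})$. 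This is a two-simple-pole-free situation with exactly two odd zeroes, so Theorem~\ref{thm:32} governs it.

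Next I would match the four exceptions of Theorem~\ref{thm:32} against the present setup. Since $n_{O}=2$, the relevant odd-zero orders are $2b_{1}-2$ and $2b_{2}-2$, so the stratum is of the shape $\mathcal{Q}(a_{1},\dots,a_{n},b_{1}',b_{2}',-2^{t})$ with two odd zeroes, and the last two exceptions of Theorem~\ref{thm:32} are exactly the arithmetic obstruction: for a configuration of the form $L(r_{1},\dots)$ with $\sum r_{i}$ even one needs $\sum r_{i}\geq b_{1}+b_{2}$ (here $2\max(b_{1}',b_{2}')+2=2b_{1}$, matching the even branch once the $L$-normalization is unwound), and for $\sum r_{i}$ odd one needs $\sum r_{i}\geq b_{1}$. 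The first two exceptions of Theorem~\ref{thm:32}, which apply to the strata $\mathcal{Q}(p-2,p-2,-2^{p})$ and $\mathcal{Q}(p-1,p-3,-2^{p})$ with all-but-two residues equal, are precisely the configurations I would show correspond, after translating quadratic residues back to angles via $c=2\pi\sqrt{r}$, to the forbidden families listed in Proposition~\ref{prop:69} (the $C=A\pm B$ case, $p$ odd) and Proposition~\ref{prop:68} (the $(A^{2},A^{2},B^{2},\dots)$ case, $p$ even). Having removed those by hypothesis, the only surviving constraints are strengthened Gauss-Bonnet and the stated arithmetic condition.

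The hard part will be the careful bookkeeping in the reduction step: verifying that for each of the first two exceptions of Theorem~\ref{thm:32} one genuinely cannot sneak into a forbidden configuration by a suboptimal choice of which singularities sit in the equatorial net, and that the parity and magnitude conditions translate without off-by-one errors between the quadratic-residue normalization ($r_{i}$, with $\max$-bounds written in terms of zero orders $2b_{i}-2$) and the angle normalization (the $b_{i}$ and $c_{k}$). In particular I would need to track how the common factor $L$ and the evenness or oddness of $\sum r_{i}$ interact with the fact that the two odd zeroes contribute $\max(b_{1},b_{2})=b_{1}$, so that the even branch threshold $b_{1}+b_{2}$ and the odd branch threshold $b_{1}$ emerge correctly from $2\max(p-1,p-3)$ type quantities in Theorem~\ref{thm:32}. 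Finally, I would confirm strict dihedral (not merely co-axial) monodromy: since both odd singularities are zeroes on the equatorial net and at least one non-integer singularity ($n_{N}\geq1$) sits at a pole of the latitude foliation, the differential is primitive and the monodromy cannot be co-axial, exactly as at the end of the proof of Theorem~\ref{thm:65}.
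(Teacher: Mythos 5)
You follow the paper's proof step by step: reduction to hemispherical surfaces (Proposition~\ref{prop:isodef}) and totally real Jenkins--Strebel differentials, passage to the maximal stratum $\mathcal{Q}(2a_{1}-2,\dots,2a_{n_{E}}-2,2b_{1}-2,2b_{2}-2,-2^{t})$ with $t=\sum a_{i}-n_{E}+b_{1}+b_{2}$ and configuration $(c_{1}^{2},\dots,c_{n_{N}}^{2},1,\dots,1)$, identification of the first two exceptions of Theorem~\ref{thm:32} with Propositions~\ref{prop:69} and~\ref{prop:68}, identification of the last two exceptions with the stated arithmetic condition, and the final strict-dihedral check. So the approach is the same as the paper's.

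The genuine gap lies in the two steps you defer as ``bookkeeping'': they are precisely where the theorem lives, and your sketched verifications of both are incorrect. First, the last two exceptions of Theorem~\ref{thm:32} have thresholds $2p$ (even case, with $p=t=T-n_{E}$ the number of double poles) and $\max(2b_{1}-2,2b_{2}-2)$ (odd case), whereas the theorem's thresholds are $b_{1}+b_{2}$ and $b_{1}$; your identity ``$2\max(b_{1}',b_{2}')+2=2b_{1}$'' (where $b_{j}'=2b_{j}-2$ are the odd zero orders) is false, since it equals $4b_{1}-2$, and no unwinding of $L$ reconciles $2p=2(\sum a_{i}-n_{E})+2(b_{1}+b_{2})$ with $b_{1}+b_{2}$. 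These conditions are genuinely inequivalent: for the distribution $(5\pi,\pi,\tfrac{2\pi}{3},\tfrac{2\pi}{3},\tfrac{4\pi}{3})$ the only admissible stratum is $\mathcal{Q}(3,-1,-2^{3})$ with configuration proportional to $(1,1,4)$, which satisfies the stated arithmetic condition ($\sum r_{i}=4\geq b_{1}+b_{2}=3$, even) yet is excluded by the third exception of Theorem~\ref{thm:32} ($4<2p=6$); non-realizability can be checked independently, since the canonical double cover would be a form in $\mathcal{H}(4,-1^{6})$ with residues $(\pm1,\pm1,\pm2)$, forbidden by Theorem~\ref{thm:34} because $4\not>4$. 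So ``exactly the arithmetic obstruction'' is not a translation one may assert and postpone; as stated it is false (this example even shows the theorem's stated bounds conflict with Theorem~\ref{thm:32} itself). Second, your monotonicity claim justifying the reduction to the maximal stratum --- adding residue-$1$ double poles ``cannot turn a realizable configuration into a forbidden one'' --- fails for the even-case exception: moving an even singularity of angle $2a\pi$ from a double pole to a zero trades one weight $a$ for $a$ weights equal to $1$, so $\sum f_{j}$ is unchanged (not increased), while the threshold $2p$ grows to $2p+2a-2$, i.e.\ the implication runs in the wrong direction (and if the original configuration contains no residue-$1$ pole, the normalization $L$ itself can change, altering $\sum f_{j}$ entirely). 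The paper's own proof asserts both points just as briefly, so you have reproduced its outline faithfully; but a complete proof cannot leave them unproved, because apart from the strengthened Gauss--Bonnet inequality this translation is the entire content of the theorem.
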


\begin{proof}
The steps of the proof are essentially the same as for Theorem~\ref{thm:65}. We first have to prove that the existence of a cone spherical metric implies that the conditions on the array of angles are satisfied. Using Theorem~\ref{thm:GBplus}, Proposition~\ref{prop:68} and Proposition~\ref{prop:69}, it remains just to prove that in the case where $c_{1},\dots,c_{n_{N}}$ are commensurable, the additional arithmetic condition is satisfied.\newline
The cone spherical metric corresponds to a primitive quadratic differential $\omega$ belonging to a stratum $\mathcal{Q}(2a_{1}-2,\dots,2a_{s}-2,2b_{1}-2,2b_{2}-2,-2^{t})$ where:
\begin{itemize}
    \item the number $s$ satisfies $0 \leq s \leq n_{E}$;
    \item $2t=2b_{1}+2b_{2}-2s+2\sum\limits_{i=1}^{s} a_{i}$.
\end{itemize}
The configuration of quadratic residues realized by $\omega$ is:
$$(c_{1})^{2},\dots,(c_{n_{N}})^{2},(a_{s+1})^{2},\dots,(a_{n_{E}})^{2},1,\dots,1.$$
By hypothesis, the configuration is of the form $M\cdot\left((x_{1})^{2},\dots,(x_{t})^{2}\right)$ where $M>0$ and $x_{1},\dots,x_{t}$ are coprime integer numbers. Theorem~\ref{thm:32} then implies that:
\begin{itemize}
    \item $\sum x_{i} > 2b_{1}+2b_{2}-2$ if $\sum x_{i}$ is even;
    \item $\sum x_{i} > 2\max(b_{1},b_{2})-2$ if $\sum x_{i}$ is odd.
\end{itemize}
Now we consider two cases depending whether $t>n_{N}+(n_{E}-s)$ or not. In the first case, at least one of the quadratic residues is equal to $1$. The number of elements equal to $1$ in this list is $t-n_{N}-(n_{E}-s)=b_{1}+b_{2}+\sum\limits_{i=1}^{s} a_{i} +2-n$. Thus, if it is positive for some value of $s$, it is also positive for $s=n_{E}$. It follows that $c_{1},\dots,c_{n_{N}}$ belong to $\pi\mathbb{Q}$ because $c_{1},\dots,c_{n_{N}},1$ are commensurable (otherwise the arithmetic condition does not need to be satisfied). Consequently, $\sqrt{M}=\sqrt{L}$ is the lowest common denominator of this list of numbers.\newline
We have $$\sqrt{M}\sum x_{i}= \sum\limits_{i=s+1}^{n_{E}}a_{i} + \sum c_{i} + (t-n_{N}-(n_{E}-s))\,,$$ and deduce that $$\sqrt{M}\sum x_{i}= \sum a_{i} +\sum b_{i} + \sum c_{i} +2-n= \sum c_{i} + T +2-n = \sqrt{L}\sum r_{i}\,.$$ It follows that vector $(c_{1},\dots,c_{n_{N}},1,\dots,1)$ satisfies the arithmetic condition (the parity of $\sum x_{i}$ does not depend on the choice of $s$).\newline
  
If $t=n_{N}+(n_{E}-s)$, then we have $b_{1}+b_{2}-s+\sum\limits_{i=1}^{s} a_{i}=n_{N}+(n_{E}-s)$. After simplification, we obtain $T=n_{N}+n_{E}+\sum\limits_{i=s+1}^{n_{E}}a_{i}$. The strengthened Gauss-Bonnet inequality $T \geq n-2$ then implies that $s=n_{E}$. The vector $(c_{1},\dots,c_{n_{N}},1,\dots,1)$ thus coincides with the configuration of residues realized by $\omega$ and satisfies the arithmetic condition.\newline

Conversely (as in the proof of Theorem~\ref{thm:65}), for any array of angles satisfying the conditions, we construct a totally real Jenkins-Strebel primitive quadratic differential corresponding to a hemispherical surface realizing the array of angles. We just have to realize the adequate configuration of quadratic residues in the adequate stratum.\newline
We consider the stratum $\mathcal{Q} = \mathcal{Q}(2a_{1}-2,\dots,2a_{n_{E}}-2,2b_{1}-2,2b_{2}-2,-2^{t})$. Since the array of angles satisfies the strengthened Gauss-Bonnet inequality, we have $t = \sum a_{i} +b_{1}+b_{2} -n_{E} = T-n_{E} \geq 1$. It follows that $\mathcal{Q}$ is nonempty. It remains to prove that the configuration of residues
$(c_{1},\dots,c_{n_{N}},1,\dots,1)$ with $T+2-n$ elements equal to $1$ is realized in $\mathcal{Q}$. This amounts to check that none of the four obstructions described in Theorem~\ref{thm:32} applies to $(c_{1},\dots,c_{n_{N}},b_{3},1,\dots,1)$. The additional arithmetic conditions exactly imply that the third and fourth obstructions do not apply.\newline
If the second obstruction of Theorem~\ref{thm:32} applies, then $n_{E}=0$, $b_{1}=b_{2}+1$ and the array of residues is of the form $A^{2},A^{2},B^{2},\dots,B^{2}$ (with an even number of quadratic residues equal to $B^{2}$). The array of angles is thus of one the forms ruled out by Proposition~\ref{prop:68}.\newline
If the first obstruction of Theorem~\ref{thm:32} applies, then $n_{E}=0$ and $b_{1}=b_{2}$. The array of residues contains three numbers $A^{2},B^{2},C^{2}$ satisfying equation $A=B+C$. All the corresponding arrays of angles are ruled out by Proposition~\ref{prop:69} (we should note that if one number of $A,B,C$ is equal to $1$, the corresponding point in the cone spherical metric is a regular point). No obstruction of Theorem~\ref{thm:32} applies and the array of angles is realized by a hemispherical surface corresponding to a quadratic differential of $\mathcal{Q}$.\newline
Finally, the hemispherical surface we obtained cannot have co-axial monodromy since it has odd singularities on the equator and at least one non-integer singularity at the poles since $n_{N} \geq 1$.
\end{proof}

\subsection{Comparison}
\label{sec:comparisongzero}

Just like in Proposition~\ref{prop:54}, some arrays of angles that can be realized by a spherical metric with co-axial monodromy can also be realized by a spherical metric with strict dihedral monodromy. We give a complete characterization.

\begin{prop}
Let $2\pi(a_{1},\dots,a_{n_{E}},b_{1},\dots,b_{n_{O}},c_{1},\dots,c_{n_{N}})$ be an array of angles. If it is realized by a spherical metric with co-axial monodromy on a punctured sphere, then it can also be realized by a metric with strict dihedral monodromy on a punctured sphere if and only if $n_{O} \geq 2$ and $n_{O}+n_{N}\geq 3$.
\end{prop}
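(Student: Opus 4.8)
The statement is an equivalence between a purely combinatorial condition on the angles and strict dihedral realizability, under the standing hypothesis of co-axial realizability; the plan is to prove the two implications separately, handling the reverse one by cases on $n_O$. For necessity, suppose the distribution is realized with strict dihedral monodromy on the punctured sphere. Lemma~\ref{lem:62} gives $n_O \geq 2$ at once. To obtain $n_O + n_N \geq 3$ I would argue that if $n_O = 2$ and $n_N = 0$ held, then by Section~4 of~\cite{EGT} the monodromy would be forced to be co-axial, contradicting strictness; hence $n_N \geq 1$ whenever $n_O = 2$, so in every case $n_O + n_N \geq 3$. This direction needs nothing beyond the two quoted results.

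For sufficiency I would feed the distribution into Theorem~\ref{thm:63}, \ref{thm:65} or~\ref{thm:610} according to whether $n_O \geq 4$, $n_O = 3$, or $n_O = 2$ (the last with $n_N \geq 1$, as just guaranteed), and verify their hypotheses using only the conditions of Theorem~\ref{thm:61}. The decisive input is the second condition there: for some admissible signed sum $K \geq 0$ of the non-even angles $b_i$ and $c_k$, the integer $M = \sum a_i + 2 - n_E - (n_O + n_N) - K$ is nonnegative. The point is that $M \geq 0$ rewrites as $\sum a_i \geq n_E + n_O + n_N - 2 + K \geq n - 2$, so $T = \sum a_i + b_1 + b_2 \geq n - 2$ and the strengthened Gauss–Bonnet inequality of Proposition~\ref{prop:GBplus} holds. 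This already closes the case $n_O \geq 4$ via Theorem~\ref{thm:63}.

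It then remains to rule out the exceptional families and to discharge the arithmetic conditions for $n_O \in \{2,3\}$. The forbidden families of Propositions~\ref{prop:64}, \ref{prop:68} and~\ref{prop:69} all have $n_E = 0$ and at least three non-even angles, so for them $M = 2 - (n_O + n_N) - K \leq -1 < 0$; being co-axial realizable, our distribution cannot be one of these, so no such obstruction applies. For the arithmetic conditions I would exploit the identity $T + 2 - n = M + K + b_1 + b_2$, valid for both $n_O = 2$ and $n_O = 3$, which counts exactly the regular points (entries equal to $1$) in the vector $v$ of Theorems~\ref{thm:65} and~\ref{thm:610}. Writing $v = L(r_1, \dots)$ with integer entries, each entry equal to $1$ contributes the positive integer $1/L \geq 1$ to $\sum r_i$, whence
\[
\sum r_i \;\geq\; T + 2 - n \;=\; M + K + b_1 + b_2 \;\geq\; b_1 + b_2 \;\geq\; b_1,
\]
using $M, K \geq 0$. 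Thus both the even bound $\sum r_i \geq b_1 + b_2$ and the odd bound $\sum r_i \geq b_1$ hold irrespective of the parity of $\sum r_i$, and Theorems~\ref{thm:65} and~\ref{thm:610} then produce the desired hemispherical surface with strict dihedral monodromy.

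The main obstacle, and the only genuine computation, is this last step. A priori the arithmetic conditions of Theorems~\ref{thm:65} and~\ref{thm:610} look like independent constraints about which co-axial realizability says nothing, and the crux is to recognize that $M \geq 0$ forces enough regular points to render them vacuous. Establishing the identity $T + 2 - n = M + K + b_1 + b_2$ and the accompanying parity bookkeeping is precisely what converts the geometric hypothesis into the needed inequality; once this is in hand, the three classification theorems do the rest, and the verification that the constructed surface is \emph{strict} dihedral is inherited directly from the statements of Theorems~\ref{thm:63}, \ref{thm:65} and~\ref{thm:610}.
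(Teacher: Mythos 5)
Your proposal is correct and follows essentially the same route as the paper's proof: necessity via Lemma~\ref{lem:62} plus the fact that only two singularities with nontrivial monodromy force co-axiality, and sufficiency by converting the numerical conditions of Theorem~\ref{thm:61} (namely $M,K\geq 0$) into the hypotheses of Theorems~\ref{thm:63}, \ref{thm:65} and~\ref{thm:610}, the decisive point in both arguments being that the vector $v$ contains $T+2-n\geq b_{1}+b_{2}$ entries equal to $1$, each contributing a positive integer to $\sum r_{i}$, which makes the arithmetic conditions automatic. The only (cosmetic) differences are that your direct bound $\sum r_{i}\geq b_{1}+b_{2}\geq b_{1}$ streamlines the paper's case analysis on $L$ (which instead reaches a contradiction from the non-integrality of $b_{3}$, resp.\ $c_{k}$), and that you exclude the exceptional families by noting they fail co-axial realizability ($M<0$), where the paper equivalently observes that they all have $n_{E}=0$ while co-axial realizability forces $n_{E}\geq 1$.
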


\begin{proof}
Lemma~\ref{lem:62} implies that $n_{O} \geq 2$ is a necessary condition for the existence of a spherical metric with dihedral monodromy on a punctured sphere. Besides, if the number of singularities with nontrivial monodromy is exactly two, then the monodromy of the metric is automatically co-axial. Therefore, $n_{O}+n_{N}\geq 3$ is a necessary condition. We will prove that these two conditions are also necessary.\newline
We consider an array of angles realized by a spherical metric with co-axial monodromy. We assume that it satisfies the two necessary conditions. Theorem~\ref{thm:61} implies in particular that $\sum a_{i} \geq n_{E}+n_{O}+n_{N}-2$. Since we have $n_{O}+n_{N}\geq 3$, we deduce $n_{E} \geq 1$. Therefore, the array of angles is not forbidden by Propositions~\ref{prop:64},~\ref{prop:68} and~\ref{prop:69}. Besides, the condition on the sum of orders of even singularities in Theorem~\ref{thm:61} clearly implies the strengthened Gauss-Bonnet inequality (see Theorem~\ref{thm:GBplus}). It remains to prove that the array of angles satisfies the hypotheses of Theorems~\ref{thm:63},~\ref{thm:65} and~\ref{thm:610}.\newline
If $n_{O} \geq 4$, then there is no additional condition to check and Theorem~\ref{thm:63} implies that the array of angles is realized by a spherical metric with strict dihedral monodromy.\newline

If $n_{O}=2$ or $n_{O}=3$, we assume that $b_{1} \geq b_{2} \geq b_{3}$. We already know that $\sum a_{i} \geq n-2$. We set $K=T+2-n$ and $T=\sum a_{i} +b_{1}+b_{2} \geq n-2$ (Strengthened Gauss-Bonnet inequality). Thus, $K \geq b_{1}+b_{2}$. Theorem~\ref{thm:65} and~\ref{thm:610} require an additional arithmetic condition.\newline
We first consider the case $n_{O}=3$. The condition of Theorem~\ref{thm:65} is the following. Let~$v$ be the vector $(c_{1},\dots,c_{n_{N}},b_{3},1,\dots,1)$ with $K$ elements equal to $1$. If the array of angles cannot be realized by a metric with strict dihedral monodromy, then~$v$ is of the form $L(r_{1},\dots,r_{n_{N}+K+1})$ with $L>0$ and $r_{1},\dots,r_{n_{N}+K+1}$ are integers. We should have:
\begin{itemize}
\item $\sum r_{i} \geq 2b_{1}+2b_{2}$ if $\sum r_{i}$ is even;
\item $\sum r_{i} \geq 2b_{1}$ if $\sum r_{i}$ is odd.
\end{itemize}
If the condition is not satisfied, then $L=1$ because otherwise the $K \geq b_{1}+b_{2}$ elements of~$v$ that are equal to $1$ would be enough to satisfy the bound. However, $b_{3}$ is not an integer, which leads to a contradiction since $\frac{b_{3}}{L}$ is required to be an integer.\newline
In the case $n_{O}=2$, the condition of Theorem~\ref{thm:610} is essentially the same. We have to consider the vector $(c_{1},\dots,c_{n_{N}},b_{3},1,\dots,1)$ with $K$ elements equal to $1$. Similarly, if the condition is not satisfied, then $L=1$. This implies that $c_{1},\dots,c_{n_{N}}$ are integers, which is a contradiction. We already know by hypothesis that $n_{N} \geq 1$. This ends the proof.
\end{proof}

\paragraph{\bf Acknowledgements.} The authors would like to thank Alexandre Eremenko and the anonymous referee for valuable remarks. The second author would also like to thank Boris Shapiro for introducing him to the field of spherical metrics.\newline

\nopagebreak
\vskip.5cm
\end{document}